\documentclass[11pt,reqno, oneside]{amsart}
\usepackage[hidelinks]{hyperref}
\oddsidemargin = 0cm \evensidemargin = 0cm \textwidth = 16cm
\usepackage{caption}
\usepackage{amssymb,amsfonts,amscd,amsbsy, color, amsmath}
\usepackage[mathscr]{eucal}
\usepackage{url}
\usepackage{graphicx}
\usepackage{mathtools}
\usepackage[textwidth=25mm,textsize=tiny]{todonotes}
\usepackage{tabularx}
\usepackage{tabu}
\usepackage{float, color}
\usepackage{placeins}
\usepackage{enumitem}

\usepackage{verbatim}
\usepackage{soul}

\usepackage{tikz}
\usetikzlibrary{math}

\usetikzlibrary{shapes,snakes}
\usetikzlibrary{plotmarks}
\usetikzlibrary{knots}

\usepackage{cite}

\newtheorem{theorem}{Theorem}[section]
\newtheorem{lemma}[theorem]{Lemma}
\newtheorem{proposition}[theorem]{Proposition}
\newtheorem{corollary}[theorem]{Corollary}

\theoremstyle{definition}
\newtheorem{definition}[theorem]{Definition}
\newtheorem{remark}[theorem]{Remark}
\numberwithin{equation}{section}
\newtheorem{example}[theorem]{Example}
\newtheorem{question}[theorem]{Question}

\newcommand{\Z}{\mathbb{Z}}
\def\sign{\operatorname{sign}}

\makeatletter
\def\imod#1{\allowbreak\mkern5mu({\operator@font mod}\,\,#1)}
\makeatother
\allowdisplaybreaks

\usepackage{hyperref}
\hypersetup{
  colorlinks,
  linkcolor={blue},
  citecolor={cyan},
  urlcolor={blue}
}


\definecolor{sagegreen}{rgb}{0.0, 0.5, 0.0}

\renewcommand{\=}{\;=\;}

\newcommand{\calT}{\mathcal T}
\newcommand{\calV}{\mathcal V}
\newcommand{\calE}{\mathcal E}

\newcommand{\Q}{\mathbb{Q}}
\newcommand{\C}{\mathbb{C}}

\def\thin{\hspace{.5pt}}
\newcommand{\Li}{\operatorname{Li}}

\newcommand{\bea}{\begin{equation}\begin{aligned}}
\newcommand{\eea}{\end{aligned}\end{equation}}
\newcommand{\bean}{\begin{equation*}\begin{aligned}}
\newcommand{\eean}{\end{aligned}\end{equation*}}

\newcommand{\baaa}{\begin{align}}
\newcommand{\eaaa}{\end{align}}
\newcommand{\qbinom}{\genfrac{[}{]}{0pt}{}}

\makeatletter
\@namedef{subjclassname@2020}{%
  \textup{2020} Mathematics Subject Classification}
\makeatother

\begin{document}

\title[Arborescent links and modular tails]{Arborescent links and modular tails}

\author{Robert Osburn}
\author{Matthias Storzer}

\address{School of Mathematics and Statistics, University College Dublin, Belfield, Dublin 4, Ireland}

\email{robert.osburn@ucd.ie}
\email{matthias.storzer@ucd.ie}

\subjclass[2020]{57K10, 57K14, 57M15, 11F27}
\keywords{Arborescent knots, tails, colored Jones polynomial, theta functions, false theta functions, asymptotics}

\date{\today}

\begin{abstract}
We prove an explicit formula for the tail of the colored Jones polynomial for a class of arborescent links in terms of a product of theta functions and/or false theta functions. We also provide numerical evidence towards a classification of the modularity of tails of the colored Jones polynomial for alternating knots.
\end{abstract}

\maketitle

\section{Introduction}
Let $K$ be a knot and $J_N(K;q)$ be the $N$th colored Jones polynomial, normalized to be 1 for the unknot and the classical Jones polynomial for $N=2$. The colored Jones polynomial features prominently in many open problems in quantum topology. For example, the holy grail in this area is the Volume Conjecture  \cite{kashaev, hm, murakami-murakami, my}. This conjecture relates the asymptotic behavior of $J_N(K;q)$ evaluated at an $N$th root of unity $\zeta_N := e^{\frac{2\pi i}{N}}$ to the simplicial volume (or Gromov norm) of the knot complement. Precisely, 
\begin{equation} \label{vc}
\lim_{N \to \infty} \frac{\log | J_{N}(K; \zeta_N)|}{N} \= \frac{\text{Vol}(S^3 \setminus K)}{2 \pi}
\end{equation}
for all knots $K$. One consequence of this conjecture is the following: if (\ref{vc}) is true, then a knot $K$ is trivial if and only if all of its colored Jones polynomials $J_{N}(K;q)$ are trivial. The conjecture has been proven for the following knots and links \cite{ty}: torus knots, all hyperbolic knots with at most seven crossings, Borromean rings, twisted Whitehead links, Whitehead chains and, very recently, hyperbolic double twist knots \cite{jm}. In \cite{z}, Zagier formulated a generalization of (\ref{vc}) which also incorporated the quantum modularity of $J_{N}(K;q)$ for hyperbolic knots $K$. For recent work in this direction, see \cite{bd, gz1, gz2, wh}.

In this paper, we are interested in stability properties for the coefficients of $J_{N}(L;q)$ where $L$ is a link. The {\it tail} of the colored Jones polynomial of a link $L$ (if it exists) is a power series $\Phi_{L}(q)$ whose first $N$ coefficients agree (up to a common sign) with the first $N$ coefficients of $J_N(L;q)$ for all $N \geq 1$. In 2006, Dasbach and Lin \cite{dl} conjectured that the tail exists for all alternating links $L$. This conjecture was first resolved by Armond \cite{a} and then another proof and an explicit $q$-multisum expression for the tail of $J_N(L;q)$ was given in \cite{glz}. Subsequent intriguing developments include proving the existence (and non-existence) of the tail for families of knots and links \cite{ad1, ad2, eh1, eh2, ehs, ehl, h1, h2, h3, lee2, lv}, the categorification of the tail \cite{lee1, r, w}, higher order stability \cite{b, hall}, higher rank tails \cite{kk, y1, y2} and connections to representation theory and vertex operator algebras \cite{hs, sk1, sk2, sk3}. 

One can also find in \cite{glz} a table of forty-three conjectural identities between $\Phi_{K}(q)$ and products of theta functions and/or false theta functions, namely for a positive integer $b$, define
\begin{equation*}
h_{b}\=
h_{b}(q) \=
\sum_{n \in \mathbb{Z}} \epsilon_{b}(n) q^{\frac{bn(n+1)}{2} - n}
\end{equation*}
\noindent where
\begin{equation*}
 \epsilon_{b}(n) \= \left\{
  \begin{array}{ll}
    (-1)^n & \text{if $b$ is odd,}\\
    1 & \text{if $b$ is even and $n \geq 0$,} \\
    -1 & \text{if $b$ is even and $n < 0$.} 
  \end{array} \right. 
\end{equation*}
This table \cite[Table 6]{glz} consists of all alternating knots up to $8_4$, the twist knots $K_p$, $p>0$ or~$p<0$, the torus knots $T(2,p)$, $p>0$, each of their mirror knots $K^{*}$ and $8_5^{*}$. For example, we have
\begin{equation} \label{52c}
\begin{aligned}
\Phi_{5_2}(q) &\= (q)^{5}_{\infty} \sum_{a,b,c,d,e \geq 0} \frac{q^{2a^2 + ac + ad + ae + b^2 + be + cd + de + a + c+ d +e}}{(q)_a (q)_{a+c} (q)_{a+d} (q)_{a+e} (q)_b (q)_{b+e} (q)_c (q)_d (q)_e} \\
& \stackrel{?}{\=} h_{4}.
\end{aligned}
\end{equation}
\noindent Here and throughout, we use the standard $q$-Pochhammer symbol
\begin{equation*}
(a)_n = (a;q)_n \coloneqq \prod_{k=1}^{n} (1-aq^{k-1}),
\end{equation*}
valid for $n \in \mathbb{N} \cup \{ \infty \}$. Note that $h_1=0$, $h_2=1$ and $h_3=(q)_{\infty}$. In general, $h_{b}$ is a theta function if $b$ is odd and a false theta function if $b$ is even. The modularity in the former situation is classical \cite{bvz} while false theta functions are only recently known as examples of quantum modular forms \cite[Section 4.4]{go}. Andrews \cite{an} verified the conjecture for the knots $3_1$, $4_1$ and $6_2$. In \cite{ko}, Keilthy and the first author proved not only (\ref{52c}), but {\it all} of the remaining conjectural identities in \cite{glz} via a unified $q$-theoretic approach. This approach was then used in \cite{bo} to extend \cite[Table 6]{glz} to all alternating knots up to ten crossings. Curiously, there are entries in \cite[Tables 1 and 2]{bo} and \cite[Table 6]{glz} in which a conjectural identity for $\Phi_{K}(q)$ is not known. Moreover, there is no known conjectural identity for {\it any} alternating knot (or its mirror) from $10_{79}$ to $10_{123}$.

Our main goal in this paper is to prove an explicit formula for the tail of the colored Jones polynomial for a class of alternating links in terms of products of $h_b$'s. In order to state our main result, we require some setup. For further details, see Section 2. A weighted tree\footnote{We assume that all weighted trees are reduced. See Remark \ref{reduced} (ii).} $\Gamma =  (\mathcal{V}, \mathcal{E}, w)$ is a finite planar tree with vertex set $\mathcal{V}$, edge set $\mathcal{E}$ and weight $w(v) \in \mathbb{Z}$ associated to one section around $v \in \mathcal{V}$. Given $\Gamma$, one can associate a link $L$ henceforth called an {\it arborescent link}. $\Gamma$ is called {\it alternating} if there exists a bipartition $\mathcal{V}_{+} \cup \mathcal{V}_{-}$ of $\mathcal{V}$ with $\pm w(\calV_\pm)\geq0$. Here, $w(\mathcal{V}_{+})$ (respectively, $w(\mathcal{V}_{-})$) is the set of weights in $\mathcal{V}_{+}$ (respectively, $\mathcal{V}_{-}$). Note that $\Gamma$ is alternating if we can choose a sign for each vertex of weight $0$ such that the sets $\calV_\pm$ of vertices whose weights have sign $\pm $ form a bipartition of $\calV$. If $\Gamma$ is alternating, then so is the corresponding link $L$. Our main result is now the following.

\begin{theorem} \label{main} Let $\Gamma = (\mathcal{V}, \mathcal{E}, w)$ be an alternating weighted tree with arborescent link $L$. If $0 \not \in w(\mathcal{V}_{-})$, then
\begin{equation} \label{form}
\Phi_{L}(q) \= \prod_{v \in \mathcal{V}_{+}} h_{w(v) + e(v)}
\end{equation}
where $e(v)$ is the number of edges adjacent to $v$.
\end{theorem}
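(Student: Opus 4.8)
The plan is to proceed by induction on the number of vertices of $\Gamma$, peeling off one leaf at a time and tracking how the tail of the colored Jones polynomial transforms under the corresponding operation on arborescent links. First I would set up the base case: a single-vertex weighted tree with weight $w \geq 0$ (since $0 \notin w(\mathcal{V}_{-})$, an isolated vertex must be positive or have weight $0$ assigned to $\mathcal{V}_{+}$), which gives the torus link $T(2, w)$; its tail is known to be $h_{w}$ by the computations already reducible to \cite[Table 6]{glz}, and here $e(v) = 0$ so the formula reads $\Phi_{L}(q) = h_{w}$, as required. I would also record the trivial cases $h_1 = 0$, $h_2 = 1$, $h_3 = (q)_\infty$ noted in the excerpt, since these will appear when a positive vertex has $w(v) + e(v) \leq 2$ and force degenerate behavior that must be handled separately (in particular $h_1 = 0$ when $w(v) + e(v) = 1$, which should be excluded or correspond to a split/reduction).

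The inductive step is the heart of the argument. Choose a leaf $v_0 \in \mathcal{V}$ with unique neighbor $v_1$, and let $\Gamma'$ be the tree obtained by deleting $v_0$. The key is a \emph{tail gluing lemma}: attaching a leaf of weight $w(v_0)$ to $v_1$ multiplies the tail by $h_{w(v_0)+1}$ (the local contribution of the new leaf, which has $e(v_0)=1$ edge) and simultaneously increments $e(v_1)$ by one, hence replaces the factor $h_{w(v_1)+e_{\Gamma'}(v_1)}$ by $h_{w(v_1)+e_{\Gamma}(v_1)}$ when $v_1 \in \mathcal{V}_{+}$; when $v_1 \in \mathcal{V}_{-}$ the negative vertex contributes no factor but the edge count still matters for the diagrammatic reduction. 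On the knot-theory side this leaf attachment corresponds to forming a connected sum / tangle sum with a $(2, w(v_0))$-torus tangle, and I would invoke the behavior of the tail under such operations — the tail of an adequate-diagram tangle sum factors, analogously to how $\Phi$ behaves under connected sum and the twist-region stacking used in \cite{glz, ko, bo}. Concretely I expect to realize \eqref{form} by an explicit $q$-multisum manipulation of the type in \cite{ko}: each vertex $v \in \mathcal{V}_{+}$ contributes a block of summation variables indexed by its incident edges, the $(q)_\infty$ prefactors telescope, and a change of variables reduces the full multisum to the product $\prod_{v \in \mathcal{V}_{+}} h_{w(v)+e(v)}$ by repeatedly applying the single-vertex evaluation.

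The role of the hypothesis $0 \notin w(\mathcal{V}_{-})$ deserves explicit attention. A weight-$0$ vertex is a ``transparent'' vertex in the arborescent construction; if it lies in $\mathcal{V}_{-}$ it would create an edge connection that is not adequately reduced (cf. the reducedness assumption in the footnote and Remark~\ref{reduced}(ii)), and the tail formula would pick up spurious cancellation — precisely the failure of the clean product form. Assuming all weight-$0$ vertices are positive ensures that every negative vertex genuinely contributes a nontrivial negative twist region, keeping the associated link diagram adequate throughout the induction, which is what licenses the tail-factorization step.

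The main obstacle I anticipate is the gluing lemma itself: proving that the tail is \emph{multiplicative} under attaching a leaf, i.e.\ that $\Phi_{L(\Gamma)}(q) = h_{w(v_0)+1} \cdot \Phi_{L(\Gamma')}(q)$ after the edge-count bookkeeping. This requires either (a) a careful analysis of the reduced alternating diagram of the arborescent link, showing the new twist region interacts with the rest of the diagram only through a single ``corner'' so that Armond--Dasbach-type skein arguments \cite{a, ad1} localize the contribution, or (b) a direct $q$-series identity at the level of the Garoufalidis--Lê multisum \cite{glz}, isolating the variables attached to $v_0$ and summing them via a $q$-binomial / Durfee-square identity down to $h_{w(v_0)+1}$. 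Approach (b) is likely cleaner and parallels \cite{ko}, but the combinatorial indexing of the multisum over a tree — matching summation variables to edges versus vertices, and controlling signs $\epsilon_b(n)$ coming from the even/odd dichotomy of $w(v)+e(v)$ — will be the delicate part, especially reconciling the $(-1)^n$ (odd $b$, theta) versus sign-split (even $b$, false theta) cases so that the product of $h_b$'s comes out with exactly the right mixed theta / false-theta character.
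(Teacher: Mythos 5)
There is a genuine gap: your entire argument is carried by the ``tail gluing lemma,'' and neither of the routes you sketch for proving it will close. As literally stated the lemma is not even a well-formed recursion: when the attached leaf $v_0$ lies in $\calV_-$ (so its neighbor $v_1\in\calV_+$), the passage from $\Gamma'$ to $\Gamma$ does not multiply $\Phi_{L(\Gamma')}(q)$ by anything --- it \emph{replaces} the factor $h_{w(v_1)+e_{\Gamma'}(v_1)}$ by $h_{w(v_1)+e_{\Gamma'}(v_1)+1}$, which is not an operation you can perform knowing only the power series $\Phi_{L(\Gamma')}(q)$. More fundamentally, multiplicativity of the tail under attaching a twist-tangle leaf is not a known skein-theoretic fact that ``localizes''; it is exactly equivalent to the theorem being proved. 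The paper avoids this entirely by outsourcing all $q$-series content to two results of Armond--Dasbach \cite{ad1}: the tail of an alternating link is determined by its reduced Tait graph $\calT_+'$ alone, and if $\calT_+'$ is an edge-connected sum of polygons of sizes $b_i\geq 2$ then $\Phi_L(q)=\prod_i h_{b_i}$ (equation~(\ref{key})). The actual work is then purely graph-theoretic: an induction on rooted subtrees (Proposition~\ref{prp:taitgarph_arbo}, Lemmas~\ref{lma:pf1} and~\ref{lma:edgecon}) showing that $0\notin w(\calV_-)$ forces $\calT_+'$ to be an edge-connected sum of polygons of sizes $w(v)+e(v)$, $v\in\calV_+$. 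Your alternative route (b), a direct multisum manipulation \`a la \cite{ko}, would require Andrews--Gordon-type identities tailored to each tree; Section~5 of the paper makes clear there is no general mechanism of this kind (for $8_5$ the sum differs from $h_5^2$ precisely by a $q$-binomial insertion, and the identity fails).

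Your heuristic for the hypothesis $0\notin w(\calV_-)$ is in the right spirit but misses the precise mechanism. A weight-$0$ negative vertex does not cause ``spurious cancellation'' in an otherwise valid product; rather, the nonzero weight at each negative vertex is what produces the single edge between the top and bottom marked vertices in the Tait graph of the corresponding tangle, and that edge is the gluing edge along which consecutive polygons become \emph{edge}-connected (Lemma~\ref{lma:edgecon}, Corollary~\ref{cor:0inwVTait}). When it is absent the polygons meet only at vertices, (\ref{key}) does not apply, and the conclusion is genuinely false: $\Phi_{8_5}(q)=1-2q+\cdots\neq h_4^2h_3$. Your base case and bookkeeping of $e(v)$ are correct, and your worry about $h_1=0$ is legitimate but resolved by the reducedness assumption; the missing ingredient is the reduction of the whole problem to the combinatorics of $\calT_+'$.
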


\begin{remark} \label{rem1}
Given a weighted tree $\Gamma$ with arborescent link $L$, the mirror image $L^{*}$ is constructed from the weighted tree obtained by flipping the signs of the weights of $\Gamma$. Thus, by Theorem \ref{main}, if $0 \not\in w(\mathcal{V}_{+})$, we have
\begin{equation} \label{mform}
\Phi_{L^{*}}(q) \= \prod_{v \in \mathcal{V}_{-}} h_{-w(v) + e(v)}.
\end{equation} 
\end{remark}

\begin{remark}
By Theorem \ref{main} and Remark \ref{rem1}, one recovers {\it all} of the entries in \cite[Tables 1 and 2]{bo} and \cite[Table 6]{glz} without a ``?". For example, given the weighted tree
\begin{equation*}
\tikz{
\draw[thick] (1,0) -- (2,0);
\filldraw (1,0) circle (2pt)
 node[anchor=south] {$-2$};
\filldraw (2,0) circle (2pt)
 node[anchor=south] {$3$};
} 
\end{equation*}
we will see in Section 2 that the associated knot is $K=5_2$. By (\ref{form}) and (\ref{mform}), we have
\begin{equation*} \label{52}
\Phi_{5_2}(q) \= h_4,\qquad\quad \Phi_{5_2^{*}}(q) \= h_3.
\end{equation*}
For the remaining {arborescent knots $K$} with ``question marks", the obstruction to finding an identity and thus determining the type of modularity for $\Phi_{K}(q)$ is the condition $0 \not\in w(\mathcal{V}_{-})$ in Theorem \ref{main}. This condition is equivalent to the fact that the reduced Tait graph of $K$ is {\it not} the edge-connected sum of polygons (see Corollary \ref{cor:0inwVTait}). We discuss these cases further in Section~5. 
\end{remark}

The paper is organized as follows. In Section 2, we recall the construction of arborescent links as given in, e.g., \cite[Chapter 17]{afhkln}, \cite[Chapter 12]{bs} or \cite[Chapter 1]{g}. In Section 3, we prove Theorem \ref{main} using properties of reduced Tait graphs for arborescent links (in particular, see the key result Proposition~\ref{prp:taitgarph_arbo_zeroless} which is of independent interest). In Section 4, we give applications of Theorem \ref{main} to various examples of arborescent knots. In Section 5, we discuss asymptotic properties of $\Phi_{K}(q)$ when Theorem~\ref{main} is not applicable. These asymptotics suggest a classification of alternating arborescent knots $K$ such that $\Phi_{K}(q)$ is a product of $h_b$'s (see Question \ref{q}).

\section{Preliminaries}
We begin by recalling some background from knot theory.

\subsection{Tait graphs}\label{sec:tait_graph}
Let $L$ be an alternating link and $D$ its associated {\it sign-colored} diagram which satisfies the rule in Figure \ref{fig:cross} at each crossing of $L$. The $\pm$-\emph{Tait graphs} $\mathcal T_\pm$ of an alternating link with sign-colored diagram $D$ are the graphs with vertices corresponding to the $\pm$-colored faces of $D$. Two vertices form an edge if the corresponding faces share a crossing. For a given alternating link, $\mathcal{T}_{+}$ is dual to $\mathcal{T}_{-}$. Moreover, the $+$-Tait graph for $L$ is the $-$-Tait graph for the mirror $L^*$ and vice versa. The {\it reduced} Tait graphs $\mathcal{T}_{\pm}^{\prime}$ are obtained from $\mathcal{T}_{\pm}$ by replacing every set of two edges that connect the same two vertices by a single edge and removing loops.

\begin{figure}[H]
\centering
\begin{tikzpicture}
\begin{knot}[clip width = 8,flip crossing=1]
\strand[very thick]
    (-1,-1) to[out=45,in=180+45] (1,1); 
\strand[very thick]
    (-1,1) to[out=-45,in=180-45] (1,-1); 
\end{knot}
\node at (3/4,0){$+$};
\node at (-3/4,0){$+$};
\node at (0,3/4){$-$};
\node at (0,-3/4){$-$};
\end{tikzpicture}
\caption{$+$ and $-$ regions.}
\label{fig:cross}
\end{figure}
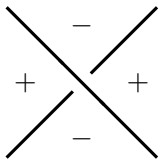
\noindent For example, a sign-colored diagram, Tait graphs and reduced Tait graphs for $K=5_2$ are given in Figure \ref{fig:dTrT}.

\begin{figure}[ht]
\begin{equation*}
\begin{tikzpicture}[scale=.6]
  \begin{knot}[
    clip width = 4,
    flip crossing/.list={1,3,5}
    ]
\strand[very thick,black]
    (5.75,1.25+.25)
    to[out=-90+45,in=90] (6.75,.25+.25)
    to[out=-90,in=90] (5.75,-1+.25+.25)
    to[out=-90,in=-180+45] (8,-.75+.25)
    to[out=45,in=-180] (9,0.25+.25)
    to[out=0,in=90] (10.5,-1.5+.25);
\strand[very thick, looseness=1,black]
    (10.5,-1.5+.25)
    to[out=-90,in=-90-45] (5.75,-2+.25+.25)
    to[out=45,in=-90] (6.75,-1+.25+.25)
    to[out=90,in=-90] (5.75,.25+.25)
    to[out=90,in=90+45+0] (8,0.25+.25)
    to[out=-45+0,in=180] (9,-.75+.25)
    to[out=0,in=180+45+45] (10.5,1.25+.25)
    to[out=90,in=90+45] (5.75,1.25+.25);
\end{knot}
\node[gray] at (9.25,0){$-$};
\node[gray] at (7.25,0){$-$};
\node[gray] at (8,1.5){$+$};
\node[gray] at (8,-1.5){$+$};
\node[gray] at (5,0){$-$};
\node[gray] at (6.25,.5){$+$};
\node[gray] at (6.25,-.5){$+$};
\end{tikzpicture}
\end{equation*}

\begin{equation*}\begin{aligned}
\tikzmath{\shift = 6;}
\begin{tikzpicture}[scale=1.2]
\node at (-2,0){$\calT_+\ $};
  \node (v1) at (1/2,1){};
  \node (v2) at (-1/2,1/2){};
  \node (v3) at (-1/2,-1/2){};
  \node (v4) at (1/2,-1){};
\filldraw (v1) circle (2pt);
\filldraw (v2) circle (2pt);
\filldraw (v3) circle (2pt);
\filldraw (v4) circle (2pt);

\draw[thick] (v1.center) -- (v2.center);
\draw[thick] (v2.center) -- (v3.center);
\draw[thick] (v3.center) -- (v4.center);
\draw[thick] (v1.center) to[out=-45,in=45] (v4.center);
\draw[thick] (v1.center) to[out=-90-45,in=90+45] (v4.center);
\node at (-2+\shift,0){$\calT_+'\ $};
  \node (v1) at (1/2+\shift,1){};
  \node (v2) at (-1/2+\shift,1/2){};
  \node (v3) at (-1/2+\shift,-1/2){};
  \node (ghost) at (1+\shift,0){};
  \node (v4) at (1/2+\shift,-1){};
\filldraw (v1) circle (2pt);
\filldraw (v2) circle (2pt);
\filldraw (v3) circle (2pt);
\filldraw (v4) circle (2pt);

\draw[thick] (v1.center) -- (v2.center);
\draw[thick] (v2.center) -- (v3.center);
\draw[thick] (v3.center) -- (v4.center);
\draw[thick] (v1.center) to (v4.center);
\end{tikzpicture}
\\[15pt]
\tikzmath{\shift = 6;} 
\begin{tikzpicture}[scale=1.2]
\node at (-2,.375){$\calT_-\ $};
\node (v1) at (-1,0){};
\node (v2) at (1,0){};
\node (v3) at (0,1.5){};
\filldraw (v1) circle (2pt);
\filldraw (v2) circle (2pt);
\filldraw (v3) circle (2pt);
\draw[thick] (v1.center) -- (v2.center);
\draw[thick] (v2.center) -- (v3.center);
\draw[thick] (v1.center) -- (v3.center);
\draw[thick] (v1.center) to[out=45,in=180] (0,1/2) to[out=0,in=90+45] (v2.center);
\draw[thick] (v1.center) to[out=-45,in=180] (0,-1/2) to[out=0,in=-90-45] (v2.center);
\node at (-2+\shift,.375){$\calT_-'\ $};
\node (v1) at (-1+\shift,0){};
\node (v2) at (1+\shift,0){};
\node (v3) at (0+\shift,1.5){};
\filldraw (v1) circle (2pt);
\filldraw (v2) circle (2pt);
\filldraw (v3) circle (2pt);

\draw[thick] (v1.center) -- (v2.center);
\draw[thick] (v2.center) -- (v3.center);
\draw[thick] (v1.center) -- (v3.center);
\end{tikzpicture}                                                                                                                                                                                                                                                                                                                                                                                                                                                                                                                                                                                                                                                                                                                                                                                                                                                                                                                    
\end{aligned}
\end{equation*}
\caption{Signed-colored diagram, $\mathcal{T}_{\pm}$ and $\mathcal{T}_{\pm}^{\prime}$ for $K=5_2$.}
\label{fig:dTrT}
\end{figure}
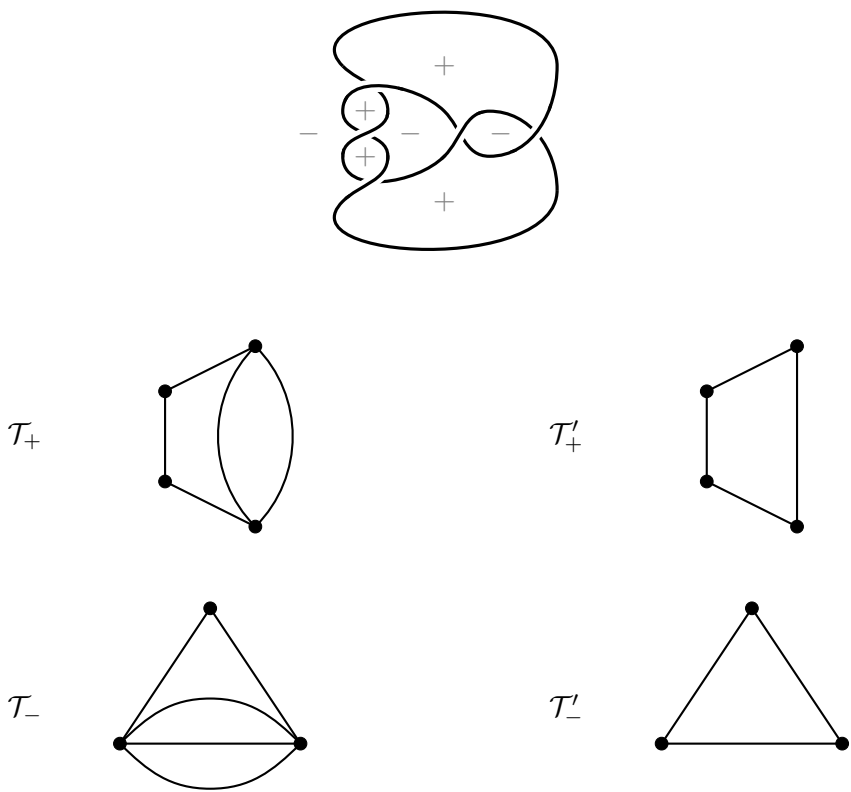

\subsection{Arborescent links} \label{ak}
Following \cite[Chapter 17]{afhkln}, \cite[Chapter 12]{bs} or \cite[Chapter 1]{g}, we introduce arborescent links which are a class of links associated to weighted trees. The construction of arborescent links described below is equivalent to that of Conway’s algebraic links \cite{c}, see \cite[Chapter 14.3]{bs} for a direct comparison.

\begin{definition}
A \emph{weighted tree} $\Gamma = (\calV,\calE,w)$ is a planar embedding of a tree $(\calV,\calE)$ together with a weight $w(v) \in \mathbb{Z}$ assigned to one section around $v$ for each vertex $v\in\calV$.
\end{definition}
For a given weighted tree $\Gamma$, the weights are depicted as integers written in its sections. We now proceed as follows: 

\begin{enumerate}
\item Let $v\in\calV$ be a vertex with weight $w(v)$ and $n\in\Z_{\geq 0}$ adjacent vertices $v_1,\ldots, v_n\in\calV$ in counterclockwise order around $v$. We construct a ribbon associated to $v$ that has $n$ marked squares corresponding to $v_i$, $i=1,\ldots,n$, followed by $w(v)$ half-twists, see Figure~\ref{fig:ribbonv}. We use the convention that
\tikz[baseline=-2ex,scale=.7]{
\begin{knot}[clip width = 4]
\strand[very thick] (0,0) to[out=00,in=180] (1,-.5); 
\strand[very thick] (0,-.5) to[out=00,in=180] (1,0); 
\end{knot}
}
is a positive half-twist. The ribbon has two orientations: a horizontal core orientation ($c$) and a vertical normal orientation ($n$). 

\begin{figure}[h]
\tikzmath{\th=.5;
\ccx=-6; \ccy=0;} 
\begin{tikzpicture}
\draw (-2.5,-\th/2) node[thick, fill=yellow] {$v_1$};
\draw (-.5,-\th/2) node[thick, fill=yellow] {$v_n$};
\node at (-1.5,-\th/2){$\cdots$};
\node at (1.5,-\th/2){$\cdots$};
\begin{knot}[
    clip width = 4,
    flip crossing = 2
    ]
\strand[very thick]
    (-3.5,0) to (0,0);
\strand[very thick]
    (-3.5,-\th) to (0,-\th);
\strand[very thick] (0,0)
    to[out=00,in=180] (1,-\th);
\strand[very thick] (2,-\th)
    to[out=00,in=180] (3,0);
\strand[very thick] (0,-\th)
    to[out=00,in=180] (1,-0);
\strand[very thick] (2,-0)
    to[out=00,in=180] (3,-\th);
\strand[very thick,rounded corners=.2cm] (3,0) to (3.25,0)
        .. controls (3+2.5,2) and (-3.5-2.5,2) .. (-3.75,0)
        to (-3.5,0);
\strand[very thick,rounded corners=.2cm] (3,-\th) to (3.25,-\th)
        .. controls (3+2.5,2-\th) and (-3.5-2.5,2-\th) .. (-3.75,-\th)
        to (-3.5,-\th);
\end{knot}
\draw[dotted] (-3.25,-1/4) circle (1/6);
\draw[dotted] (\ccx, \ccy-1) to[in=-90,out=-90] (-3.25,-1/4-1/6);
\draw[dotted] (\ccx,\ccy) circle (1);
\draw[very thick, ->] (\ccx-1/2,\ccy-1/2-.021) to (\ccx-1/2,\ccy+1-1/2);
\draw[very thick, ->] (\ccx-1/2,\ccy-1/2) to (\ccx+1-1/2,\ccy-1/2);
\node[anchor=south] at (\ccx+1-1/2,\ccy-1/2) {\small{($c$)}};
\node[anchor=west] at (\ccx-1/2,\ccy+1-1/2) {\small{($n$)}};
\end{tikzpicture}
\caption{The ribbon associated to $v$.}
\label{fig:ribbonv}
\end{figure}
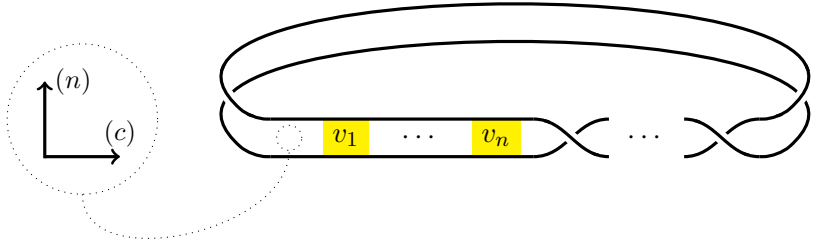

\item For every edge $(v,v')\in\calE$, we plumb the ribbons for $v$ and $v'$ along the squares for $v'$ and $v$ such that the core orientation of $v$ matches the normal orientation of $v'$ and vice versa. 

\item The plumbed ribbons define a surface and the boundary of this surface is a link $L$. We say that $L$ is the {\it arborescent link} associated to $\Gamma$.
\end{enumerate}

\begin{remark} \label{reduced} (i)  If $\Gamma$ is alternating, then so is the arborescent link $L$ by (1)--(3). 

(ii) A weighted tree $\Gamma$ is {\it reduced} if it has no vertex of degree at most $2$ and weight $0$. According to \cite[Section 17.3 (0.2), Section 17.5.3]{afhkln}, this condition ensures that any vertex with weight $0$ and degree $2$ can be removed from $\Gamma$ without changing $L$ and that $L$ is prime and unsplittable. Throughout this paper, we assume (without further mention) that all weighted trees $\Gamma$ are reduced.
\end{remark}

We now illustrate this construction.

\begin{example}\label{ex:arborescent_knots}
(1) For the weighted tree in Figure \ref{fig:w52}, we construct the ribbons associated to the vertices $v_1$ and $v_2$ in Figure \ref{fig:rib52}. Plumbing the two ribbons in Figure \ref{fig:rib52} yields the surface whose boundary can be transformed into the knot $K=5_2$, see Figure~\ref{fig:52sur}.
\begin{figure}[h]
\begin{tikzpicture}
\draw[thick] (1,0) -- (2,0);
\filldraw (1,0) circle (2pt)
 node[anchor=south] {$-2$}
 node[anchor=north] {$v_1$};
\filldraw (2,0) circle (2pt)
 node[anchor=south] {$3$}
 node[anchor=north] {$v_2$};
\end{tikzpicture}
\caption{A weighted tree for $5_2$.}
\label{fig:w52}
\end{figure}
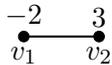

\begin{figure}[ht]
\includegraphics[scale=.85]{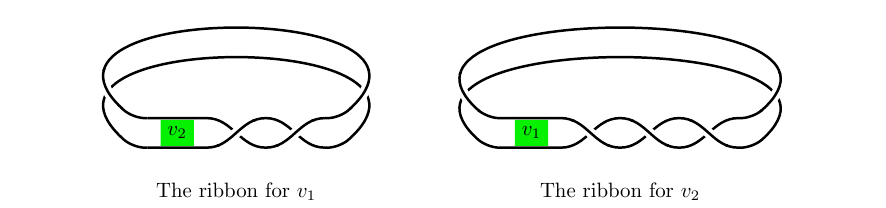}
\caption{The two ribbons for Figure \ref{fig:w52}.}
\label{fig:rib52}
\end{figure}

\begin{figure}[ht]
\includegraphics[width = .5\textwidth]
{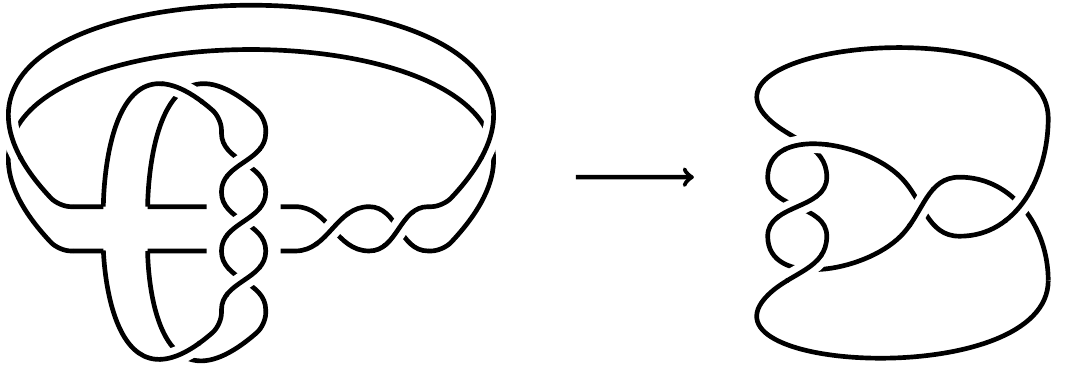} 
\caption{The surface for the two plumbed ribbons and $5_2$.}
\label{fig:52sur}
\end{figure}

\noindent (2) We consider the weighted tree in Figure \ref{fig:w85}. The ribbons corresponding to the vertices $v_1$, $v_2$ and $v_3$ are given in Figure \ref{fig:rib85}. Plumbing the three ribbons in Figure \ref{fig:rib85} leads to the surface whose boundary can be transformed into the knot $K=8_5$, see Figure \ref{fig:85sur}.
\begin{figure}[ht]
\begin{tikzpicture}[scale=.8]
\node (v0) at (0,0){};
\node (v1) at (-3/2,0){};
\node (v2) at (0,-3/2){};
\node (v3) at (3/2,0){};

\filldraw [] (v1) circle (3pt)
  node[anchor=north east] {$v_1$}
  node[anchor=south east] {$3$};
\filldraw [black] (v0) circle (3pt)
  node[anchor=south west] {$v_0$}
   node[anchor=south east] {$-0$};
\filldraw [] (v3) circle (3pt)
  node[anchor=south west] {$2$}
  node[anchor=north west] {$v_3$};
\filldraw [] (v2) circle (3pt)
  node[anchor=north east] {$3$}
  node[anchor=north west] {$v_2$};
\draw[very thick] (v1.center) -- (v3.center);
\draw[very thick] (v2.center) -- (v0.center);
\end{tikzpicture}
\caption{A weighted tree for $8_5$.}
\label{fig:w85}
\end{figure}
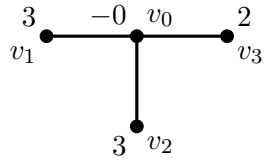
\begin{figure}[ht]\begin{center}
\includegraphics[scale=.85]
{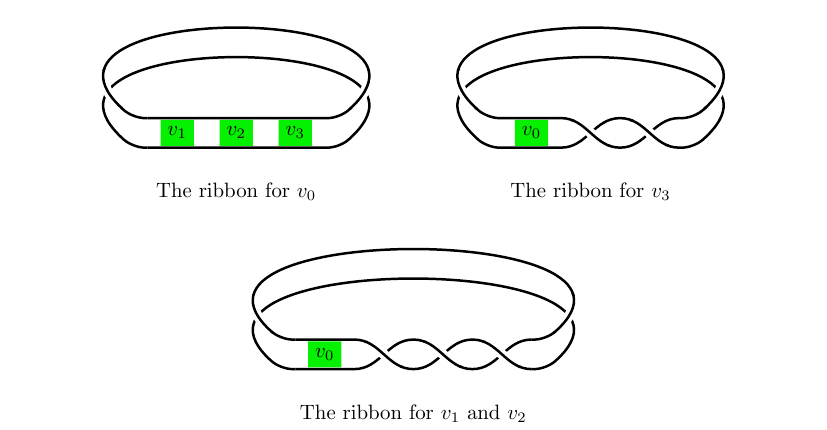}
\end{center}
\caption{The three ribbons for Figure \ref{fig:w85}.}
\label{fig:rib85}
\end{figure}

\begin{figure}[ht]
\includegraphics[scale=.9]{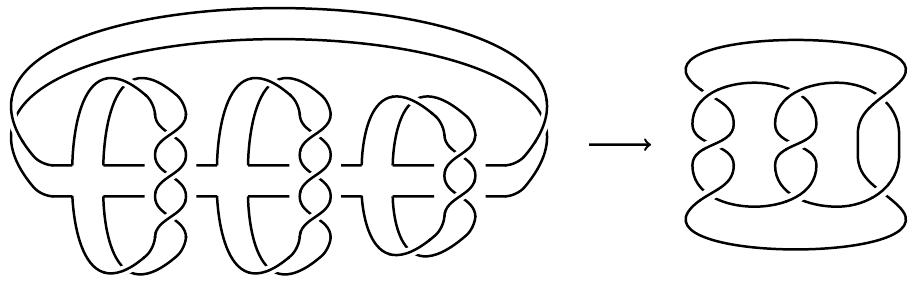}
\caption{The surface for the three plumbed ribbons and $8_5$.}
\label{fig:85sur}
\end{figure}
\end{example}

\subsection{Arborescent tangles}
\label{sec:arbtangl}
A tangle is defined as a region of a link diagram with exactly four emerging strings in the directions NW, NE, SE and SW. Two tangles are equivalent if one can be deformed into the other via a sequence of Reidemeister moves without changing the four emerging strings or moving another string over the emerging strings. A \emph{weighted rooted tree} is a weighted tree with a marked vertex, the \emph{root}, which has an emanating germ in one direction, here depicted by
    \tikz{
    \filldraw[fill=black] (1,0) circle (2pt);
    \draw[thick] (1,0) -- (1,.25);
    }.
For a weighted rooted tree with root $v_0$, the associated weighted tree is the tree where $v_0$ is considered as an ordinary vertex.
In this case, we define an \emph{arborescent tangle} as the boundary of the surface corresponding to the associated weighted tree where the ribbon for $v_0$ is cut at the place corresponding to the germ, leaving four emanating strings. Consider the weighted tree from Example~\ref{ex:arborescent_knots} (1) with root $v_1$ as in Figure \ref{fig:w52r}. The corresponding tangle is given by Figure~\ref{fig:52cut}.
\begin{figure}[ht]
\begin{tikzpicture}
\draw[thick] (1,0) -- (2,0);
\draw[thick] (1,0) -- (1-.25,0);
\filldraw[fill=black] (1,0) circle (2pt)
 node[anchor=south] {$-2$}
 node[anchor=north] {$v_1$};
\filldraw (2,0) circle (2pt)
 node[anchor=south] {$3$}
 node[anchor=north] {$v_2$};
\end{tikzpicture}
\caption{A weighted rooted tree for $5_2$.}
\label{fig:w52r}
\end{figure}
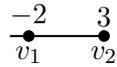
\begin{figure}[ht]
\centering
\includegraphics[width=.6\textwidth]{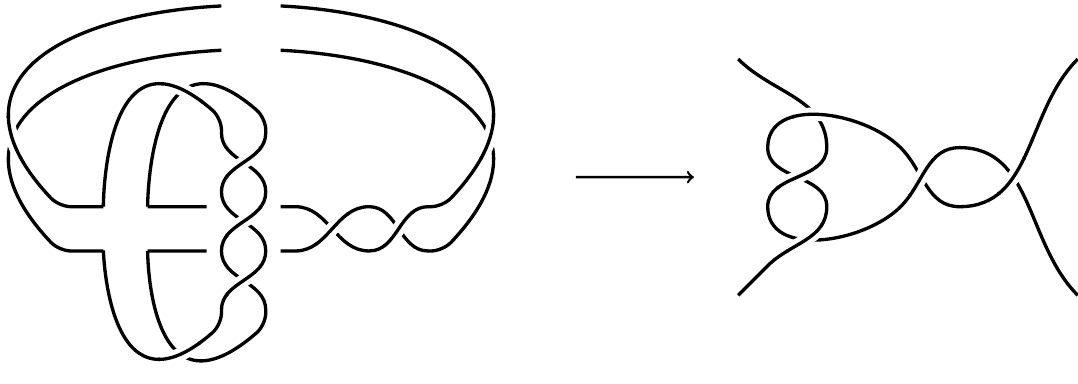}
\caption{The tangle for Figure \ref{fig:w52r}.}
\label{fig:52cut}
\end{figure}

We extend the notion of alternating from weighted trees to weighted rooted trees and adapt the bipartition $\calV = \calV_+ \cup \calV_-$ accordingly. In this case, the corresponding tangle is alternating.

The closure of a tangle is the link $L$ that is obtained by connecting the string in the NW direction with the one in the NE direction as well as the string in the SW direction with the one in the SE direction. For example, the closure of the tangle in Figure~\ref{fig:52cut} is the $5_2$ knot as in Figure~\ref{fig:52sur}. The closure of an arborescent tangle is the arborescent link constructed from the associated weighted tree.

The $\pm$-Tait graphs $\mathcal{T}_{\pm}$ for an alternating tangle are defined as in Section \ref{sec:tait_graph} with the addition of marked vertices corresponding to the North and South faces or the East and West faces. They are depicted by~\tikz{
    \filldraw[fill=white] (0,0) circle (3pt) node {$\star$};
}.
The reduced Tait graphs $\mathcal{T}_{\pm}^{\prime}$ of a tangle are obtained from the Tait graphs $\mathcal{T}_{\pm}$ of a tangle by replacing multiple edges with single edges and removing loops.

\begin{remark} \label{T2T}
The (reduced) Tait graphs for the closure $L$ of a tangle can be obtained from the (reduced) Tait graphs of the tangle as follows:\footnote{Here and throughout, for $\varepsilon\in\{+,-\}$, we write $-\varepsilon$ to mean the opposite choice of sign as for $\varepsilon$.} Assume that the North and South faces are colored by $\varepsilon\in\{+,-\}$ and thus the East and West faces are colored by $-\varepsilon$. Then the (reduced) $\varepsilon$-Tait graph of $L$ can be obtained from the
(reduced) vertical Tait graph of the tangle by umarking the marked vertices.
The (reduced) $-\varepsilon$-Tait graph of $L$
is formed by identifying the two marked vertices in the (reduced) horizontal Tait graph of a tangle.
For example, for an arborescent tangle that is constructed from an alternating weighted rooted tree with root $v_0\in\calV_{-\varepsilon}$, the North and South faces are colored by $\varepsilon$.
\end{remark}

\begin{example}\label{ex:tangletait}
(1) Consider the tangle corresponding to the weighted rooted tree in Figure \ref{fig:wrt}
\begin{figure}[ht]
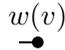

\tikz{
    \filldraw[fill=black] (1,0) circle (2pt);
    \draw[thick] (1,0) -- (1-.25,0);
    \node[anchor=south] at (1,0){$w(v)$};
    }
\caption{A weighted rooted tree for a single vertex.}
\label{fig:wrt}
\end{figure}

\noindent If $w(v)\neq0$ and $\pm = \sign(w(v))$, the Tait graphs $\mathcal{T}_{\pm}$ and $\mathcal{T}_{\mp}$ are given in Figure \ref{fig:Tsign} with $|w(v)|+1$ vertices (including the two marked vertices) and $|w(v)|$ edges, respectively.
 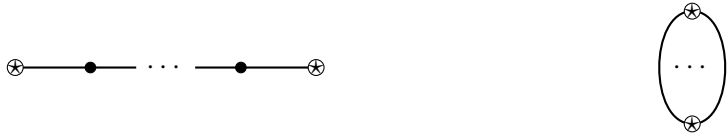
\begin{figure}[ht]
 \begin{tikzpicture}
  \node (dots) at (2,0){$\cdots$};
  \draw[thick] (0,0) -- (dots) -- (4,0);
  \filldraw[fill=white] (0,0) circle (3pt) node {$\star$};
  \filldraw[fill=black] (1,0) circle (2pt);
  \filldraw[fill=black] (3,0) circle (2pt);
  \filldraw[fill=white] (4,0) circle (3pt) node {$\star$};
  \draw[thick] (9,3/4) to[out=0,in=0] (9,-3/4);
  \draw[thick] (9,3/4) to[out=180,in=180] (9,-3/4);
  \filldraw[fill=white] (9,3/4) circle (3pt) node {$\star$};
  \node (dots) at (9,0){$\cdots$};
  \filldraw[fill=white] (9,-3/4) circle (3pt) node {$\star$};
 \end{tikzpicture}
 \caption{The Tait graphs $\mathcal{T}_{\pm}$ (left) and $\mathcal{T}_{\mp}$ (right) for Figure \ref{fig:wrt}.}
\label{fig:Tsign}
 \end{figure}

\noindent (2) The Tait graphs for the tangle in Figure \ref{fig:52cut} are given in Figure \ref{fig:T52}. From Figure \ref{fig:T52}, we obtain the Tait graphs for $5_2$ as depicted in Figure \ref{fig:dTrT} by unmarking the marked vertices for $\calT_+$ and identifying the marked vertices for $\calT_-$.
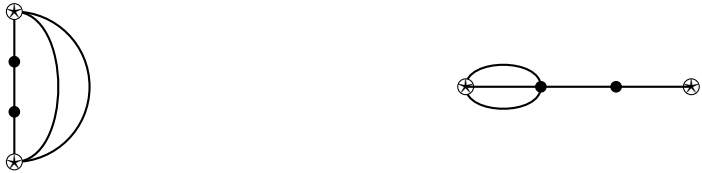
\begin{figure}[H]
 \begin{tikzpicture}
  \draw[thick] (0,1) -- (0,-1);
  \draw[thick] (0,1) to[out=0,in=0] (0,-1);
  \draw[thick] (0,1) to[out=0,in=90] (1,0)
    to[out=-90,in=0] (0,-1);
  \filldraw[fill=white] (0,1) circle (3pt) node {$\star$};
  \filldraw[fill=black] (0,1/3) circle (2pt);
  \filldraw[fill=black] (0,-1/3) circle (2pt);
  \filldraw[fill=white] (0,-1) circle (3pt) node {$\star$};
  \draw[thick] (6,0) to[out=90,in=90] (7,0);
  \draw[thick] (6,0) to[out=-90,in=-90] (7,0);
  \filldraw[fill=white] (6,0) circle (3pt) node {$\star$};
  \filldraw[fill=black] (7,0) circle (2pt);
  \filldraw[fill=black] (8,0) circle (2pt);
  \filldraw[fill=white] (9,0) circle (3pt) node {$\star$};
  \draw[thick] (6,0) -- (9,0);
 \end{tikzpicture}
\caption{The Tait graphs $\mathcal{T}_{+}$ (left) and $\mathcal{T}_{-}$ (right) for Figure \ref{fig:52cut}.}
\label{fig:T52}
\end{figure}
\end{example}

\section{Proof of Theorem~\ref{main}}

In order to prove Theorem~\ref{main}, we need the following four results. We depict tangles $T$ (with indices) by circles, weighted trees $\Gamma$ (with indices) by squares and Tait graphs $\mathcal{T}$ (with indices) by hexagons. We also denote North-South (respectively, East-West) Tait graphs with marked vertices as in Figure \ref{fig:NSEW}.
\begin{figure}[ht]
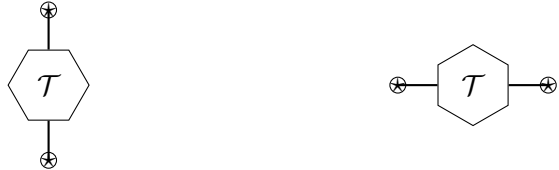

\tikz{
\useasboundingbox (-1,-1) rectangle (1,1);
\node (north) at (0,1){};
\node (south) at (0,-1){};
\filldraw[fill=white] (north) circle (3pt) node {$\star$};
\filldraw[fill=white] (south) circle (3pt) node {$\star$};
\node[rotate=0,regular polygon,regular polygon sides=6, draw,inner sep=5]
(Tn) at (0,0)
{$\calT$};
\draw[thick] (north.center) -- (Tn.north);
\draw[thick] (south.center) -- (Tn.south);
}\hspace{100pt}
\tikz{
\useasboundingbox (-1,-1) rectangle (1,1);
\node (north) at (1,0){};
\node (south) at (-1,0){};
\filldraw[fill=white] (north) circle (3pt) node {$\star$};
\filldraw[fill=white] (south) circle (3pt) node {$\star$};
\node[rotate=-90,regular polygon,regular polygon sides=6, draw,inner sep=5]
(Tn) at (0,0)
{\rotatebox{90}{$\calT$}};
\draw[thick] (north.center) -- (Tn.north);
\draw[thick] (south.center) -- (Tn.south);
}
\caption{North-South (left) and East-West Tait (right) graphs.}
\label{fig:NSEW}
\end{figure}

\begin{proposition}\label{prp:taitgarph_arbo}
 Let $\Gamma$ be an alternating weighted rooted tree with root $v_0$ and associated tangle $T$. Let $v_0\in\mathcal V_{\varepsilon}$ be connected to the subgraphs $\Gamma_1,\ldots,\Gamma_n$ via the vertices $v_1,\ldots,v_n\in\calV_{-\varepsilon}$ {in counterclockwise order}.
Let $\calT^{\, i}_{\pm}$ denote the Tait graphs of the tangle corresponding to the tree $\Gamma_i$ with root $v_i$ for $i=1, \dotsc, n$. Then the Tait graph $\calT_{\varepsilon}$ of $\Gamma$ is given by Figure \ref{fig:NSTait} (top) with $|w(v_0)|+1$ additional vertices on the right, including the marked vertex. Moreover, the Tait graph $\mathcal T_{-\varepsilon}$ of $\Gamma$ is given by Figure \ref{fig:NSTait} (bottom) with $|w(v_0)|$ additional edges from the top vertex to the bottom vertex.
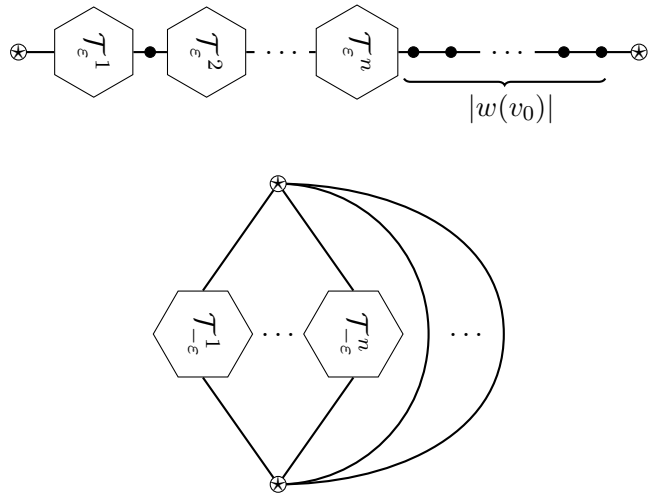
\begin{figure}[ht]
\begin{tikzpicture}
\node (west) at (.5,0){};
\node (east) at (8.75,0){};

\node[rotate=-90,regular polygon,regular polygon sides=6, draw,inner sep= 3] (T1) at (1.5,0) {$\calT^{\,1}_{\varepsilon}$};
\node[rotate=-90,regular polygon,regular polygon sides=6, draw,inner sep= 3] (T2) at (3,0) {$\calT^{\, 2}_{\varepsilon}$};
\node (dots) at (4,0) {$\ldots$};
\node[rotate=-90,regular polygon,regular polygon sides=6, draw,inner sep= 3] (Tn) at (5,0) {$\calT^{\, n}_{\varepsilon}$};
  
\filldraw[fill] (5.75,0) circle (2pt);
\filldraw[fill] (6.25,0) circle (2pt);
\filldraw[fill] (7.75,0) circle (2pt);
\filldraw[fill] (8.25,0) circle (2pt);
\node (dots2) at (7,0) {$\ldots$};

\draw[thick] (west.center) -- (T1)-- (T2) -- (dots) -- (Tn) -- (dots2) -- (east.center);
  
\filldraw[fill=white] (east) circle (3pt) node {$\star$};
\filldraw[fill=white] (west) circle (3pt) node {$\star$};
\filldraw (2.25,0) circle (2pt);
 
\draw [
    thick, decorate,
    decoration={brace,mirror,raise=10pt}]
    (Tn)+(5/8,0) -- (8.3,0)
    node at (7.05,-.75){$|w(v_0)|$}; 
\end{tikzpicture}
\\ \vspace{12pt}
\begin{tikzpicture}
\node (north) at (.5,2){};
\node (south) at (.5,-2){};
\node[regular polygon,regular polygon sides=6, draw,inner sep= 3] (T2) at (-.5,0) {\rotatebox{-90}{$\calT^{\, 1}_{{-\varepsilon}}$}};
\node at (.5,0) {$\ldots$};
\node[regular polygon,regular polygon sides=6, draw,inner sep= 3] (Tn) at (1.5,0) {\rotatebox{-90}{$\calT^{\, n}_{{-\varepsilon}}$}};
\draw[thick] (north.center) -- (T2.north);
\draw[thick] (north.center) -- (Tn.north);
\draw[thick] (south.center) -- (T2.south);
\draw[thick] (south.center) -- (Tn.south);
\draw[thick] (north.center) to[out = 0, in=90] (2.5,0) to[out=-90,in=0] (south.center)[in=-20];
\node at (3,0) {$\ldots$};
\draw[thick] (north.center) to[out = 0, in=90] (3.5,0) to[out=-90,in=0] (south.center)[in=-20];
\filldraw[fill=white] (north) circle (3pt) node[scale=1] {$\star$};
\filldraw[fill=white] (south) circle (3pt) node {$\star$};
\end{tikzpicture}
\caption{The Tait graph $\calT_{\varepsilon}$ (top) and the Tait graph $\calT_{-\varepsilon}$ (bottom).}
\label{fig:NSTait}
\end{figure}
\end{proposition}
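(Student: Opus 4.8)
The plan is to reduce the proposition to one structural step: express the tangle $T$ as a horizontal sum read off from the ribbon picture of Section~\ref{ak}, and then apply the standard behaviour of Tait graphs under such a sum; no recursion on the subtrees $\Gamma_i$ is needed. For the decomposition: by the construction recalled in Section~\ref{ak} (see Figure~\ref{fig:ribbonv}), the ribbon of $v_0$, cut at the germ, presents the $n$ squares for $v_1,\dots,v_n$ in counterclockwise (equivalently left-to-right) order followed by the $w(v_0)$ half-twists, the four strands of the germ cut being the endpoints of $T$. By step~(2) of that construction, plumbing the surface of $\Gamma_i$ into the $i$th square replaces that square by the tangle of the rooted tree $(\Gamma_i,v_i)$ rotated by $90^\circ$, since the plumbing matches the core orientation of $v_0$ with the normal orientation of $v_i$. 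Hence, reading left to right,
\[
T \;=\; \widetilde T_1 + \widetilde T_2 + \cdots + \widetilde T_n + T_0 ,
\]
where $\widetilde T_i$ is the tangle of $(\Gamma_i,v_i)$ rotated by $90^\circ$ and $T_0$ is the horizontal tangle of $w(v_0)$ half-twists. Making the counterclockwise order around $v_0$ agree with the order of the summands and tracking the $90^\circ$ rotations through the planar picture is the one fiddly point, and I expect this bookkeeping to be the main obstacle; everything afterwards is formal.

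The next step records the effect on Tait graphs. Directly from the definition of $\calT_{\pm}$, horizontal tangle addition merges the two North faces of the summands into a single face and the two South faces into a single face, while it identifies the East face of the left summand with the West face of the right summand; equivalently, it is parallel composition for the Tait graph whose marked vertices are the North and South faces and series composition for the Tait graph whose marked vertices are the East and West faces. A $90^\circ$ rotation leaves a Tait graph unchanged as an abstract graph but interchanges its pair of North/South marked vertices with its pair of East/West marked vertices. By Remark~\ref{T2T}, since $v_0\in\calV_{\varepsilon}$ the East and West faces of $T$ carry color $\varepsilon$ and the North and South faces carry color $-\varepsilon$, so $\calT_{\varepsilon}(T)$ is assembled from the summands by series composition and $\calT_{-\varepsilon}(T)$ by parallel composition. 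Moreover each $v_i\in\calV_{-\varepsilon}$, so the North and South faces of the tangle of $(\Gamma_i,v_i)$ carry color $\varepsilon$; after the $90^\circ$ rotation $\calT_{\varepsilon}(\widetilde T_i)=\calT^{\, i}_{\varepsilon}$ therefore appears as an East--West graph and $\calT_{-\varepsilon}(\widetilde T_i)=\calT^{\, i}_{-\varepsilon}$ as a North--South graph, exactly as drawn in Figure~\ref{fig:NSTait}.

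Finally I would assemble the two graphs. Series composition concatenates $\calT^{\, 1}_{\varepsilon},\dots,\calT^{\, n}_{\varepsilon}$ along their East/West marked vertices and then appends $\calT_{\varepsilon}(T_0)$, which by Example~\ref{ex:tangletait}(1) is a path of $|w(v_0)|$ edges ending in the East marked vertex (and contributes only that marked vertex when $w(v_0)=0$); this is Figure~\ref{fig:NSTait} (top). Parallel composition identifies the North marked vertices of $\calT^{\, 1}_{-\varepsilon},\dots,\calT^{\, n}_{-\varepsilon}$ and of $\calT_{-\varepsilon}(T_0)$ into a single North vertex, and likewise for South, and by Example~\ref{ex:tangletait}(1) the twist summand $T_0$ contributes exactly $|w(v_0)|$ parallel North--South edges; this is Figure~\ref{fig:NSTait} (bottom). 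The degenerate case $n=0$ is Example~\ref{ex:tangletait}(1) itself. Thus, once the decomposition of $T$ is in place, the statement follows by combining the series/parallel rule, the effect of the $90^\circ$ rotation, the coloring of Remark~\ref{T2T}, and the twist-tangle Tait graphs of Example~\ref{ex:tangletait}(1).
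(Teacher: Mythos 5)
Your proposal is correct and follows essentially the same route as the paper: the paper likewise reduces to the horizontal composition of the (rotated) subtangles $T_1,\dots,T_n$ followed by the $w(v_0)$ half-twists (its Figure~\ref{fig:tangleGamma}) and then reads off the Tait graphs via the coloring rule and Remark~\ref{T2T}. The only difference is presentational: where the paper infers the result from the pictures, you make explicit the series/parallel composition rule for the East--West and North--South Tait graphs under horizontal tangle addition and the face-swap under the $90^\circ$ rotation, which is exactly the bookkeeping the figures encode.
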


\begin{proof}
By assumption, $\Gamma$ has the shape as in Figure~\ref{fig:rib}. Consider the ribbon corresponding to $v_0$ with gluing points for $v_1,\ldots,v_n$ followed by $w(v_0)$ half-twists. We assume that $v_0\in\calV_+$, i.e., $\varepsilon = +$ and $w(v_0)\geq0$. A similar argument holds for $v_0\in\calV_{-}$. As $\Gamma$ is alternating, $v_1,\ldots,v_n\in\calV_-$ and so $w(v_i)\leq 0$. Let $T_i$ denote the tangle for $\Gamma_i$. Then the tangle associated to $\Gamma_i$ with root $v_i$ is in Figure \ref{fig:gamma}. The tangle for $\Gamma$ with root $v_0$ with marked squares for $v_1,\ldots,v_n$ is in Figure~\ref{fig:tangle}. We then glue the tangles associated with $v_1,\ldots,v_n$ to the one for $v_0$ as in Section~\ref{ak} and rearrange the strands to obtain the tangle for $\Gamma$, see Figure \ref{fig:tangleGamma}. Using Figure \ref{fig:cross} and Remark \ref{T2T}, the Tait graphs $\mathcal{T}_{\pm}$ for Figure \ref{fig:tangleGamma} are of the shape in Figure~\ref{fig:NSTait}.
\end{proof}

\begin{figure}[ht]
\begin{tikzpicture}
  \node (w1) at (-0.75,0){};
  \filldraw (w1) circle (2pt) node[anchor=south east]{$v_0$};

  \node[regular polygon,regular polygon sides=4, draw, outer sep=0,minimum size=1cm] (v1) at (-3,-2){}; \node at (v1) {$\Gamma_{1}$};
  \filldraw (v1.north) circle (2pt) node[anchor=south east]{$v_{1}$};
  
  \node[regular polygon,regular polygon sides=4, draw, outer sep=0, minimum size=1cm] (v2) at (-1.5,-2){}; \node at (v2) {$\Gamma_{2}$};
  \filldraw (v2.north) circle (2pt) node[anchor=south east]{$v_{2}$};

  \node[regular polygon,regular polygon sides=4, draw, outer sep=0, minimum size=1cm] (vm) at (1.5,-2){};
  \node at (vm) {$\Gamma_{n}$};
  \filldraw (vm.north) circle (2pt) node[anchor=south west]{$v_{n}$};

  \draw[thick] (w1.center) -- (v1.north);
  \draw[thick] (w1.center) -- (v2.north);
  \draw[thick] (w1.center) -- (vm.north);
  \node[anchor=center] at (0,-2) {$\ldots$};
\end{tikzpicture}
\caption{$\Gamma$ with root $v_0$.}
\label{fig:rib}
\end{figure}
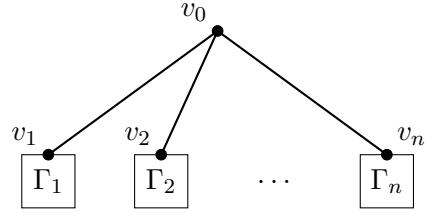

\begin{figure}[ht]
\includegraphics[scale=1]{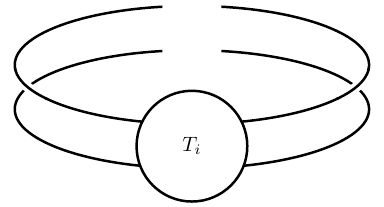}
\caption{Tangle for $\Gamma_i$ with root $v_i$.}
\label{fig:gamma}
\end{figure}

\begin{figure}[ht]
\includegraphics[scale=1]{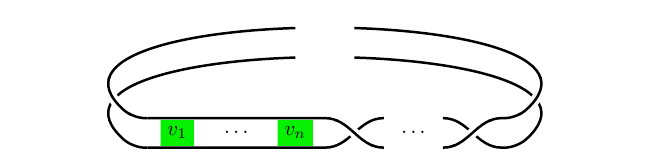}
\caption{Tangle for $\Gamma$ with root $v_0$.}
\label{fig:tangle}
\end{figure}

\begin{figure}[ht]
\begin{tikzpicture}
\node[circle,draw, very thick,anchor=center]
  (T1) at (-4.5,0) {\rotatebox{270}{\tiny $T_1$}};
\node[circle,draw, very thick,anchor=center]
  (T2) at (-3,-0) {\rotatebox{270}{\tiny $T_2$}};
\node[
    very thick,anchor=center,
    minimum width=0.5cm,
    minimum height = 8mm
    ]
  (cdots) at (-2,-0) {$\cdots$};
\node[circle,draw, very thick,anchor=center]
  (Tn) at (-1,0) {
     \rotatebox{270}{
     \tiny $T_n$
     }
  };

\draw[very thick] (T1.north east) to[out=45,in=+90+45] (T2.north west);
\draw[very thick] (T1.south east) to[out=-45,in=-90-45] (T2.south west);
\draw[very thick] (T2.north east) to[out=45,in=-180] (cdots.north west);
\draw[very thick] (T2.south east) to[out=-45,in=-180] (cdots.south west);
\draw[very thick] (Tn.north west) to[out=-90-45-90,in=0] (cdots.north east);
\draw[very thick] (Tn.south west) to[out=90+90+45,in=0] (cdots.south east);

\draw[very thick] (T1.north west) to[out=-45-180,in=-45] (-5.5,1);
\draw[very thick] (T1.south west) to[out=90+90+45,in=45] (-5.5,-1);

\begin{knot}[
    clip width = 4,
    flip crossing/.list={2,4,3}
    ]
\strand[very thick]
    (Tn.north east)
    to[out=45,in=180] (1,-.25)
    to[out=00,in=180] (2,.25);
\strand[very thick]
    (Tn.south east)
    to[out=-45,in=180] (1,.25)
    to[out=-00,in=180] (2,-.25);
\strand[very thick]
    (3,-.25)
    to[out=0,in=180+45] (4.5,1);
\strand[very thick]
    (3,.25)
    to[out=0,in=180-45] (4.5,-1);
\end{knot}
 \node at (2.5,0) {$\ldots$};
\end{tikzpicture}
\caption{The tangle for $\Gamma$ with root $v_0$.}
\label{fig:tangleGamma}
\end{figure}

\begin{example}
According to Example~\ref{ex:tangletait} (1), the Tait graphs for
 \tikz{
    \filldraw[fill=black] (1,0) circle (2pt);
    \draw[thick] (1,0) -- (1-.25,0);
    \node[anchor=south] at (1,0){$3$};
    }
 are given in Figure \ref{fig:T23}. Proposition~\ref{prp:taitgarph_arbo} implies that the Tait graphs for the rooted tree in Figure \ref{fig:w52r} are given in Figure~\ref{fig:Tw52r} in accordance with Figure \ref{fig:T52}.
 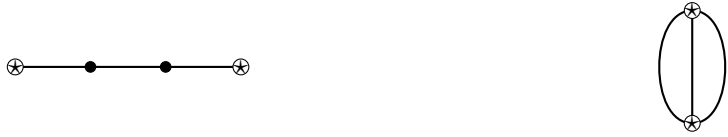
\begin{figure}[H]
\begin{tikzpicture}
  \draw[thick] (0,0) -- (3,0);
  \filldraw[fill=white] (0,0) circle (3pt) node {$\star$};
  \filldraw[fill=black] (1,0) circle (2pt);
  \filldraw[fill=black] (2,0) circle (2pt);
  \filldraw[fill=white] (3,0) circle (3pt) node {$\star$};
  \draw[thick] (9,3/4) to[out=0,in=0] (9,-3/4);
  \draw[thick] (9,3/4) to[out=180,in=180] (9,-3/4);
  \draw[thick] (9,3/4) to (9,-3/4);
  \filldraw[fill=white] (9,3/4) circle (3pt) node {$\star$};
  \filldraw[fill=white] (9,-3/4) circle (3pt) node {$\star$};
 \end{tikzpicture}
 \caption{The Tait graphs $\mathcal T_{+}^{\,1}$ (left) and $\mathcal T_{-}^{\,1}$ (right) for the root $v_1 = 3$.}
\label{fig:T23}
 \end{figure}
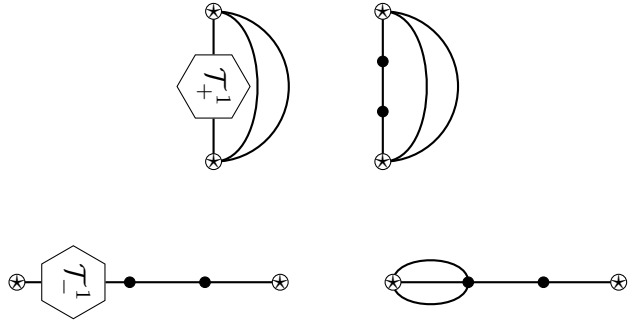
\begin{figure}[H]
  \begin{tikzpicture}
  \node[regular polygon,regular polygon sides=6, draw,inner sep= 1] (T1) at (0,0) {\rotatebox{-90}{$\calT^{\,1}_{+}$}};
  \draw[thick] (0,1) to (T1) to (0,-1);
  \draw[thick] (0,1) to[out=0,in=0] (0,-1);
  \draw[thick] (0,1) to[out=0,in=90] (1,0) to[out=-90,in=0] (0,-1);
  \filldraw[fill=white] (0,1) circle (3pt) node {$\star$};
  \filldraw[fill=white] (0,-1) circle (3pt) node {$\star$};
  \draw[thick] (2.25,1) -- (2.25,-1);
  \draw[thick] (2.25,1) to[out=0,in=0] (2.25,-1);
  \draw[thick] (2.25,1) to[out=0,in=90] (2.25+1,0)
  to[out=-90,in=0] (2.25+0,-1);
  \filldraw[fill=white] (2.25,1) circle (3pt) node {$\star$};
  \filldraw[fill=black] (2.25,1/3) circle (2pt);
  \filldraw[fill=black] (2.25,-1/3) circle (2pt);
  \filldraw[fill=white] (2.25,-1) circle (3pt) node {$\star$};
 \end{tikzpicture}
\\[10pt]
\begin{tikzpicture}
\useasboundingbox (0,-1) rectangle (8,1);
  \node[rotate=-90,regular polygon,regular polygon sides=6, draw,inner sep= 1] (T1) at (.75,0) {$\calT^{\, 1}_{-}$};
  \draw[thick] (0,0) to (T1) to (3.5,0);
  \filldraw[fill=white] (0,0) circle (3pt) node {$\star$};
  \filldraw[fill=black] (1.5,0) circle (2pt);
  \filldraw[fill=black] (2.5,0) circle (2pt);
  \filldraw[fill=white] (3.5,0) circle (3pt) node {$\star$};
\draw[thick] (5,0) to[out=90,in=90] (6,0);
  \draw[thick] (5,0) to[out=-90,in=-90] (6,0);
  \draw[thick] (5,0) -- (8,0);
  \filldraw[fill=white] (5,0) circle (3pt) node {$\star$};
  \filldraw[fill=black] (6,0) circle (2pt);
  \filldraw[fill=black] (7,0) circle (2pt);
  \filldraw[fill=white] (8,0) circle (3pt) node {$\star$};
\end{tikzpicture}
\caption{The Tait graphs $\mathcal T_{+}$ (top) and $\mathcal T_{-}$ (bottom) for Figure \ref{fig:w52r}.}
\label{fig:Tw52r}
\end{figure}
 \end{example}
 
\begin{lemma}\label{lma:pf1}
Let $\Gamma = (\calV,\calE,w)$ be an alternating weighted rooted tree with $|\calV|\geq 2$ and root $v_0\in\calV_-$ as in Figure \ref{fig:groot}
\begin{figure}[ht]
\begin{tikzpicture}
	\node (v0) at (0,1/2){};
	\draw[thick] (v0.center) -- (0,1/2+1/4);
	\node (w1) at (-3.5,-3/4){};
	\node (wn) at (3.5,-3/4){};
	
	\filldraw (v0) circle (2pt) node[anchor=south east]{$v_0$};
	\filldraw (w1) circle (2pt) node[anchor=south]{$w_1$};
	\filldraw (wn) circle (2pt) node[anchor=south]{$w_m$};
	
	\node[regular polygon,regular polygon sides=4, draw, outer sep=0] (x11) at (-5,-3) {$\Gamma^{\,{1,1}}$};
	\filldraw (x11.north) circle (2pt) node[anchor=south east]{$x_{1,1}$};
	
	\node[regular polygon,regular polygon sides=4, draw, inner sep=0] (x1m) at (-2,-3) {$\Gamma^{\,{1,m_1}}$};
	\filldraw (x1m.north) circle (2pt) node[anchor=south west]{$x_{1,m_1}$};

	\node[regular polygon,regular polygon sides=4,draw, inner sep=0] (xn1) at (5,-3) {$\Gamma^{\, {n,m_n}}$};
	\filldraw (xn1.north) circle (2pt) node[anchor=south west]{$x_{n,m_n}$};
	
	\node[regular polygon,regular polygon sides=4, draw, outer sep=0] (xnm) at (2,-3) {$\Gamma^{\, {n,1}}$};
	\filldraw (xnm.north) circle (2pt) node[anchor=south east]{$x_{n,1}$};

	\draw[thick] (v0.center) -- (w1.center);
	\draw[thick] (w1.center) -- (x11.north);
	\draw[thick] (w1.center) -- (x1m.north);
	
	\draw[thick] (v0.center) -- (wn.center);
	\draw[thick] (wn.center) -- (xn1.north);
	\draw[thick] (wn.center) -- (xnm.north);
	
	\node[anchor=center] at (0,-1) {$\ldots$};
	\node[anchor=center] at (-3.5,-3) {$\ldots$};
	\node[anchor=center] at (3.5,-3) {$\ldots$};
\end{tikzpicture}
\caption{$\Gamma$ with root $v_0 \in \calV_{-}$.}
\label{fig:groot}
\end{figure}
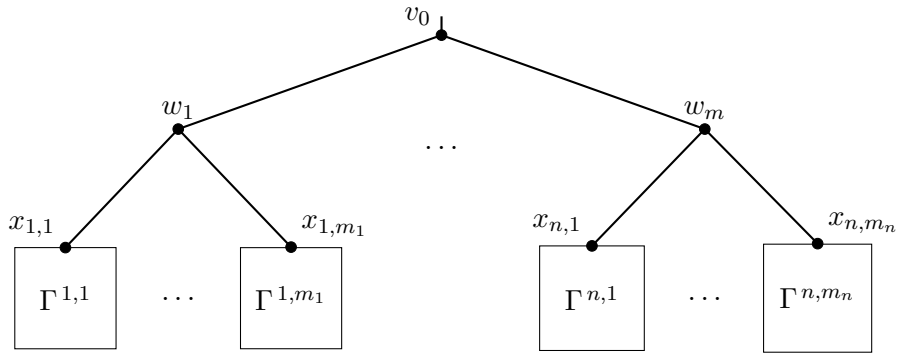
\noindent with weighted subgraphs $\Gamma^{\, {i,j}}$ with roots $x_{i,j}$ and Tait graphs for the corresponding tangles denoted by $\calT_+^{\, {i,j}}$
for $i=1,\ldots,n$ and $j=1,\ldots,m_i$. Then the Tait graph for the tangle associated to $\Gamma$ is given by Figure~\ref{fig:rT} with $|w(v_0)|$ edges from the top vertex to the bottom vertex and $w(w_i)$ additional vertices on the bottom of the $i$th string.
\end{lemma}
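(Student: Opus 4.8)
The plan is to read off the Tait graph of the tangle for $\Gamma$ by applying Proposition~\ref{prp:taitgarph_arbo} twice: first peeling off the root $v_0$, and then each of its neighbours $w_1,\dots,w_m$. Since $v_0\in\calV_-$, the relevant graph is the North--South Tait graph $\calT_+$ of the tangle (by Remark~\ref{T2T}, the North and South faces of the tangle are colored $+$), and in the notation of Proposition~\ref{prp:taitgarph_arbo} this is the case $\varepsilon=-$.

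First I would apply Proposition~\ref{prp:taitgarph_arbo} directly to $\Gamma$ with root $v_0$. Because $\Gamma$ is alternating and $v_0\in\calV_-$, the adjacent vertices $w_1,\dots,w_m$ lie in $\calV_+$ and play the role of $v_1,\dots,v_n$ there; the associated sub-tangles are those of the subtrees rooted at the $w_i$, with Tait graphs that I will denote $\calT_+^{\,w_i}$. Proposition~\ref{prp:taitgarph_arbo} then presents $\calT_+$ of $\Gamma$ as Figure~\ref{fig:NSTait} (bottom): a top vertex and a bottom vertex joined in parallel by the graphs $\calT_+^{\,w_i}$ together with $|w(v_0)|=-w(v_0)$ additional top--bottom edges. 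Next, for each $i$ I would apply Proposition~\ref{prp:taitgarph_arbo} again, this time to the subtree rooted at $w_i\in\calV_+$ (so $\varepsilon=+$): its neighbours other than $v_0$ are precisely the roots $x_{i,1},\dots,x_{i,m_i}\in\calV_-$ of the $\Gamma^{\,i,j}$, taken in counterclockwise order, so $\calT_+^{\,w_i}=\calT_\varepsilon$ is the East--West graph of Figure~\ref{fig:NSTait} (top): the graphs $\calT_+^{\,i,j}$ strung in a row, followed by $|w(w_i)|+1=w(w_i)+1$ vertices ending at the East marked vertex. Substituting this description of each $\calT_+^{\,w_i}$ into the North--South picture from the first step --- gluing the West marked vertex of each string to the common top vertex and the East marked vertex to the common bottom vertex --- produces exactly Figure~\ref{fig:rT}, with $|w(v_0)|$ edges from the top vertex to the bottom vertex and $w(w_i)$ non-marked vertices along the bottom of the $i$th string.

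The only point requiring care is the bookkeeping of the gluings in Proposition~\ref{prp:taitgarph_arbo}: matching the counterclockwise order of the $\Gamma^{\,i,j}$ around $w_i$ with the left-to-right order of the $\calT_+^{\,i,j}$ along the $i$th string, and keeping straight which marked vertex of $\calT_+^{\,w_i}$ is glued to the top vertex and which to the bottom. Once the two applications of Proposition~\ref{prp:taitgarph_arbo} are set up consistently this is routine. The weight signs needed for the vertex and edge counts, namely $-w(v_0)\ge 0$ and $w(w_i)\ge 0$, hold because $v_0\in\calV_-$ and $w_i\in\calV_+$, and the degenerate cases $w(v_0)=0$ (which forces $m\ge 2$ since $\Gamma$ is reduced) and $w(w_i)=0$ (which forces $m_i\ge 2$) are covered by the same pictures without modification.
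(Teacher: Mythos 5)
Your proposal is correct and follows essentially the same route as the paper: two applications of Proposition~\ref{prp:taitgarph_arbo} (one at $v_0\in\calV_-$ giving the parallel North--South picture with $|w(v_0)|$ extra top--bottom edges, one at each $w_i\in\calV_+$ giving the serial string of the $\calT_+^{\,i,j}$ followed by $w(w_i)$ extra vertices), then substituting one into the other to obtain Figure~\ref{fig:rT}. The paper merely performs the two applications in the opposite order (first the subtrees at the $w_i$, then the assembly at $v_0$), which is immaterial.
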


\begin{figure}[ht]
\tikzmath{\x1=-2; \x2=2;}
\begin{tikzpicture}[scale=1]
  \node (north) at (0,5.5){};
  \node (south) at (0,-5){};

  \node[regular polygon,regular polygon sides=6, draw=black, inner sep=12, fill=white] (T11) at (-2,3){};
  \node at (T11) {\rotatebox{180}{$\calT^{\,{1,1}}_{+}$}};
  \filldraw (\x1,2) circle (2pt);
  \filldraw (\x1,1) circle (2pt);
  \node (T1dots) at (\x1,1.5){$\vdots$};
  \filldraw (\x1,-1) circle (2pt);
  \node[regular polygon,regular polygon sides=6, draw=black, inner sep=12, fill=white] (T1m) at (\x1,0) {};
  \node at (T1m) {\rotatebox{180}{$\calT^{\, {1,m_1}}_{+}$}};

  \node (w1start) at (\x1,-1){};
  \filldraw (w1start) circle (2pt);
  \filldraw (\x1,-1.5) circle (2pt);
  \node[anchor=center] (w1dots) at (\x1,-2.25) {$\vdots$};
  \filldraw (\x1,-3) circle (2pt);
  \node (w1end) at (\x1,-3.5){};
  \filldraw (w1end) circle (2pt);

  \draw[thick] (north.center) -- (T11.north);
  \draw[thick] (T11.south) -- (T1dots) -- (T1m.north);
  \draw[thick] (T1m.south) -- (w1dots) -- (w1end.center) -- (south.center);

  \node[regular polygon,regular polygon sides=6, draw=black, inner sep=12, fill=white] (Tn1) at (\x2,3){};
  \node at (Tn1) {\rotatebox{180}{$\calT^{\, {n,1}}_{+}$}};
  \filldraw (\x2,2) circle (2pt);
  \filldraw (\x2,1) circle (2pt);
  \node (Tndots) at (\x2,1.5){$\vdots$};
  \filldraw (\x2,-1) circle (2pt);
  \node[regular polygon,regular polygon sides=6, draw=black, inner sep=12, fill=white] (Tnm) at (\x2,0){};
  \node at (Tnm) {\rotatebox{180}{$\calT^{\, {n,m_n}}_{+}$}};

  \node (wnstart) at (\x2,-1){};
  \filldraw (wnstart) circle (2pt);
  \filldraw (\x2,-1.5) circle (2pt);
  \node[anchor=center] (wndots) at (\x2,-2.25) {$\vdots$};
  \filldraw (\x2,-3) circle (2pt);
  \node (wnend) at (\x2,-3.5){};
  \filldraw (wnend) circle (2pt);

  \draw[thick] (north.center) -- (Tn1.north);
  \draw[thick] (Tn1.south) -- (Tndots) -- (Tnm.north);
  \draw[thick] (Tnm.south) -- (wndots) -- (wnend.center) -- (south.center);

  \node[anchor = center] at (0,3) {$\ldots$};
  \node[anchor = center] at (0,0) {$\ldots$};

\draw[thick] (north.center) to[out = 0, in=90] (5,0) to[out=-90,in=0] (south.center)[in=-20];
\draw[thick] (north.center) to[out = 0, in=90] (6.5,0) to[out=-90,in=0] (south.center)[in=-20];
\node[anchor = center] at (5.75,0) {$\cdots$};

  \filldraw[fill=white] (north) circle (3pt) node[scale=1] {$\star$};
  \filldraw[fill=white] (south) circle (3pt) node {$\star$};

\draw [thick,decorate,decoration={brace}]
   (\x1-.25,-3.6) -- (\x1-.25,-.8)  node at (\x1-1,-2.25) {$w(w_1)$};
\draw [thick,decorate,decoration={brace}]
   (\x2-.25,-3.6) -- (\x2-.25,-.8)  node at (\x2-1,-2.25) {$w(w_n)$};
\end{tikzpicture}
\caption{Tait graph for $\Gamma$.}
\label{fig:rT}
\end{figure}
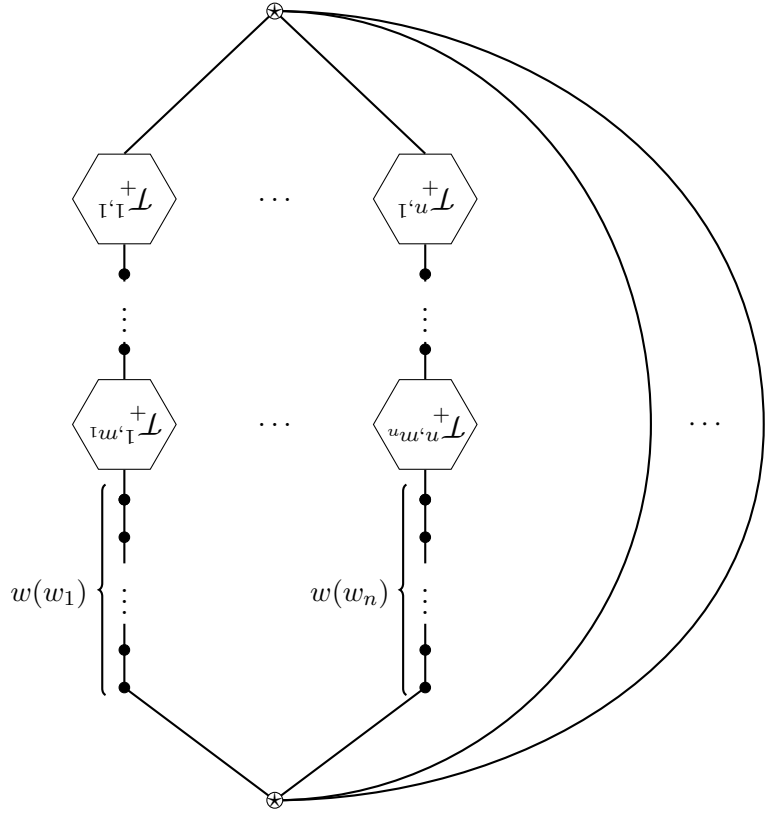

\begin{proof}
Consider the weighted subtree with root $w_i$ in Figure \ref{fig:wsub}. By Proposition~\ref{prp:taitgarph_arbo}, the Tait graph $\calT^{\, i}_+$ corresponding to Figure \ref{fig:wsub} is given by Figure \ref{fig:Twsub} where $\calT_+^{\, {i,j}}$ are the Tait graphs for $\Gamma^{\, {i,j}}$ with roots $x_{i,j}$. We now apply Proposition~\ref{prp:taitgarph_arbo} to Figure \ref{fig:groot} and obtain that its Tait graph is given by Figure \ref{fig:Tgroot} with $|w(v_0)|$ edges between the top vertex and bottom vertex. Inserting Figure~\ref{fig:Twsub} into Figure~\ref{fig:Tgroot} yields Figure \ref{fig:rT}.
\end{proof}
\vspace{8pt}

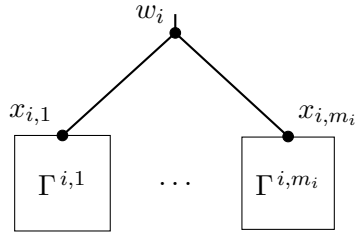
\begin{figure}[ht]
\begin{tikzpicture}
  \node (w1) at (-3.5,-1){};
  \draw[thick] (w1.center) -- (-3.5,-1+1/4);
  \filldraw (w1) circle (2pt) node[anchor=south east]{$w_i$};

  \node[regular polygon,regular polygon sides=4, draw, outer sep=0] (x11) at (-5,-3) {$\Gamma^{\, {i,1}}$};
  \filldraw (x11.north) circle (2pt) node[anchor=south east]{$x_{i,1}$};

  \node[regular polygon,regular polygon sides=4, draw, inner sep=0] (x1m) at (-2,-3) {$\Gamma^{\, {i,m_i}}$};
  \filldraw (x1m.north) circle (2pt) node[anchor=south west]{$x_{i,m_i}$};

  \draw[thick] (w1.center) -- (x11.north);
  \draw[thick] (w1.center) -- (x1m.north);
  \node[anchor=center] at (-3.5,-3) {$\ldots$};
\end{tikzpicture}
\caption{Weighted subtree with root $w_i$.}
\label{fig:wsub}
\end{figure}

\begin{figure}[h]
\begin{tikzpicture}
\node (west) at (.5,0){};
\node (east) at (8.25,0){};
\node[rotate=-90,regular polygon,regular polygon sides=6, draw,inner sep= 10] 
   (T1) at (1.5,0){};
\node[rotate=-90] at (T1) {$\calT^{\, {i,1}}_{+}$};
\node[rotate=-90,regular polygon,regular polygon sides=6, draw,inner sep= 10]
   (T2) at (3,0){};
\node[rotate=-90] at (T2) {$\calT^{\, {i,2}}_{+}$};
\node (dots) at (4.25,0) {$\ldots$};
\node[rotate=-90,regular polygon,regular polygon sides=6, draw,inner sep=10]
   (Tn) at (5.5,0){};
\node[rotate=-90] at (Tn){$\calT^{\, \tiny{i,m_i}}_{+}$};

\filldraw (2.25,0) circle (2pt);
\filldraw (3.75,0) circle (2pt);
\filldraw (4.75,0) circle (2pt);

\filldraw[fill] (6.25,0) circle (2pt);
\filldraw[fill] (6.75,0) circle (2pt);
\filldraw[fill] (7.75,0) circle (2pt);
\node (dots2) at (7.25,0) {$\ldots$};

\draw[thick] (west.center) -- (T1)-- (T2) -- (dots) -- (Tn) -- (dots2) -- (east.center);
  
\filldraw[fill=white] (east) circle (3pt) node {$\star$};
\filldraw[fill=white] (west) circle (3pt) node {$\star$};

\draw [
    thick, decorate,
    decoration={brace,mirror,raise=10pt}]
    (Tn)+(5/8,0) -- (7.9,0)
    node at (7.10,-.75){$w(w_i)$}; 
\end{tikzpicture}
\caption{Tait graph $\mathcal{T}^{\, i}_{+}$ for Figure \ref{fig:wsub}.}
\label{fig:Twsub}
\end{figure}
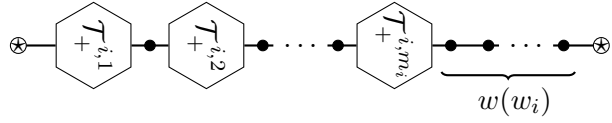

\begin{figure}[h]
  \begin{tikzpicture}  
  \node (north) at (0,5){};
  \node (south) at (0,1){};
  \node[regular polygon,regular polygon sides=6, draw,inner sep= 3] (T11) at (-1.5,3) {\rotatebox{-90}{$\calT^{\, 1}_{+}$}};
  \draw[thick] (T11.south) -- (south.center);
  \draw[thick] (T11.north) -- (north.center);
  \node[regular polygon,regular polygon sides=6, draw,inner sep= 3] (Tn1) at (1.5,3) {\rotatebox{-90}{$\calT^{\, n}_{+}$}};
\draw[thick] (Tn1.south) -- (south.center);
 \draw[thick] (Tn1.north) -- (north.center);

\node[anchor = center] at (0,3) {$\ldots$};

  \draw[thick] (north.center) to[out = 0, in=90] (3,3) to[out=-90,in=0] (south.center)[in=-20];
  \draw[thick] (north.center) to[out = 0, in=90] (4,3) to[out=-90,in=0] (south.center)[in=-20];
  \node[anchor = center] at (3.5,3) {$\cdots$};

  \filldraw[fill=white] (north) circle (3pt) node[scale=1] {$\star$};
  \filldraw[fill=white] (south) circle (3pt) node {$\star$};
  \end{tikzpicture}
  \caption{Tait graph of Figure \ref{fig:groot}.}
\label{fig:Tgroot}
\end{figure}
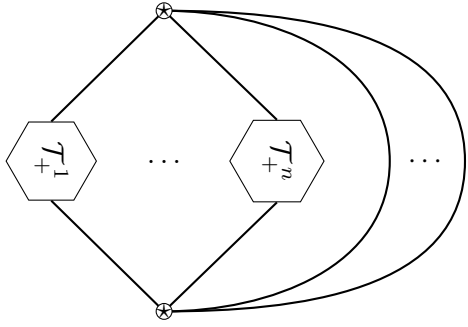

We say that a Tait graph (of a link or tangle) consists of {\it edge-connected polygons} if it can be built by gluing the polygons along a single edge.
\begin{lemma}\label{lma:edgecon}
Let $\Gamma = (\calV,\calE,w)$ be an alternating weighted rooted tree with $|\calV|\geq 2$, root $v_0\in \calV_-$ and $0\notin w(\calV_-)$.
Then the reduced Tait graph $\calT_+'$ of the tangle for $\Gamma$ consists of edge-connected polygons of sizes $\{w(v)+e(v) : v\in\calV_+\}$ and has an edge between the top vertex and bottom vertex.
\end{lemma}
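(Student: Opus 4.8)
The plan is to argue by strong induction on $|\calV|$, proving the statement of the lemma together with the degenerate case $|\calV|=1$. If $\Gamma$ is a single vertex $v_0\in\calV_-$, then $w(v_0)<0$ because $0\notin w(\calV_-)$, so by Example~\ref{ex:tangletait}(1) the Tait graph $\calT_+$ of the associated tangle consists of $|w(v_0)|$ parallel edges between the two marked vertices, whence $\calT_+'$ is a single edge between the top and bottom vertex. This matches the claim, since $\calV_+=\emptyset$ and the prescribed multiset of polygon sizes is then empty.

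For the inductive step I would take $|\calV|\geq2$ and write $\Gamma$ as in Figure~\ref{fig:groot}: the root $v_0\in\calV_-$ has neighbours $w_1,\dots,w_n\in\calV_+$, and each $w_i$ is joined to weighted rooted subtrees $\Gamma^{i,1},\dots,\Gamma^{i,m_i}$ with roots $x_{i,1},\dots,x_{i,m_i}\in\calV_-$, so that $m_i=e(w_i)-1$ (with $m_i=0$ exactly when $w_i$ is a leaf of $\Gamma$). Each $\Gamma^{i,j}$ has fewer than $|\calV|$ vertices, has its root in $\calV_-$, and inherits the property that $0$ is not the weight of any vertex of $\calV_-$; so the inductive hypothesis (or the case where $\Gamma^{i,j}$ has a single vertex) gives that $(\calT_+^{i,j})'$ is a union of edge-connected polygons of sizes $\{w(v)+e(v):v\in\calV_+^{i,j}\}$ carrying a distinguished edge between its top and bottom vertex. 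I would then invoke Lemma~\ref{lma:pf1}, which presents $\calT_+$ as the graph of Figure~\ref{fig:rT}: along the $i$th string the blocks $\calT_+^{i,1},\dots,\calT_+^{i,m_i}$ are placed in series, identifying the bottom vertex of each with the top vertex of the next; a path of $w(w_i)$ further edges is appended below; the $n$ strings run in parallel between a common top vertex and bottom vertex; and $|w(v_0)|$ extra parallel edges are added between those two vertices. Since passing to the reduction commutes with these vertex identifications, $\calT_+'$ results from the same assembly with each $\calT_+^{i,j}$ replaced by $(\calT_+^{i,j})'$, followed by merging parallel edges.

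It then remains to read off the reduced graph. Because $0\notin w(\calV_-)$ we have $w(v_0)<0$, hence $|w(v_0)|\geq1$, and the $|w(v_0)|$ parallel edges between the top and bottom vertex collapse to a single edge $e_0$, which is the asserted edge. On the $i$th string, the distinguished top--bottom edges of $(\calT_+^{i,1})',\dots,(\calT_+^{i,m_i})'$ concatenate with the appended $w(w_i)$-edge path into a path of length $m_i+w(w_i)=w(w_i)+e(w_i)-1$ between the top and bottom vertex; closing it up with $e_0$ yields a polygon of size $w(w_i)+e(w_i)$, which degenerates to a bigon, i.e.\ contributes no new polygon, precisely when $w(w_i)+e(w_i)=2$. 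These polygons all share the edge $e_0$, and the polygon blocks of each $(\calT_+^{i,j})'$ are attached in edge-connected fashion along edges of the $i$th string, each of which is an edge of the corresponding $w_i$-polygon. Thus $\calT_+'$ is a union of edge-connected polygons whose size multiset is $\{w(w_i)+e(w_i):1\le i\le n\}\cup\bigcup_{i,j}\{w(v)+e(v):v\in\calV_+^{i,j}\}=\{w(v)+e(v):v\in\calV_+\}$, with the edge $e_0$ between the top and bottom vertex, completing the induction.

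The hard part will be the bookkeeping in this last step: one must verify that the series/parallel assembly of Figure~\ref{fig:rT} followed by edge-merging creates no unexpected multiedges (so that the only parallel edges collapsed are the $|w(v_0)|$ copies of $e_0$, together with any length-one strings, which correspond exactly to the degenerate size-$2$ polygons), that the edge counts match so the $i$th string closes to a polygon of size exactly $w(w_i)+e(w_i)$, and that edge-connectedness survives the splicing of the recursively obtained polygon blocks into the strings. The hypothesis $0\notin w(\calV_-)$ is used at every level of the recursion, precisely to guarantee $|w(v_0)|\ge1$ and hence the presence of the gluing edge $e_0$.
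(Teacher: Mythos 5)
Your proof is correct and follows essentially the same route as the paper: induction on $|\calV|$, using Lemma~\ref{lma:pf1} to present $\calT_+$ as strings of blocks in parallel, and splicing in the inductively obtained top--bottom edge of each $(\calT_+^{\,i,j})'$ so that each $w_i$-string closes up with the single reduced edge $e_0$ into a polygon of size $w(w_i)+e(w_i)$. Your explicit inclusion of the $|\calV|=1$ base case (needed when some $\Gamma^{\,i,j}$ is a single leaf) and your handling of degenerate bigons are small but welcome refinements of the paper's argument.
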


\begin{proof}
We proceed by induction on $|\calV|$. Assume that $|\calV|  = 2$ with \hbox{$\calV_\pm = \{v_\pm\}$}. As $\Gamma$ has no leaf of weight $0$ (see Remark \ref{reduced}), we have $w(v_+)\neq 0$. By Example~\ref{ex:tangletait} (1), $\calT_+'$ is a polygon of size $w(v_+)+1$.

Now assume that the claim is true for all $\Gamma$'s with less than $|\calV|$ vertices. Let $v_0\in\calV_-$ be connected to $w_1,\ldots,w_n\in\calV_+$ and $w_i$ be connected to the subgraphs $\Gamma^{\,{i,j}}$ (with vertices $\calV_{i,j}$) via the vertices $x_{i,j}\in\calV_-$ for $i=1,\ldots,n$ and $j=1,\ldots,m_i$ as in Figure~\ref{fig:groot}. By abuse of notation, let $\calT_+^{\,{i,j}}$ be the reduced Tait graphs for $\Gamma^{\,{i,j}}$ with roots $x_{i,j}$. With this notation, the reduced Tait graph $\calT_+'$ is given as in Figure~\ref{fig:rT} with exactly one edge from the top vertex to the bottom vertex because $w(v_0)\neq 0$. By assumption, $\calT_+^{\, {i,j}}$ consists of edge-connected polygons of sizes $\{w(v)+e(v) :  v\in(\calV_{i,j})_+\}$ and has an edge between the top and bottom vertices. We denote the reduced Tait graph $\calT_+^{\,{i,j}}$ without this additional edge by $\widetilde{\calT}^{\tiny{\,{i,j}}}_{+}$ as depicted in Figure \ref{fig:mT}.

\begin{figure}[ht]
\tikz{
		\node (north) at (0,1){};
		\node (south) at (0,-1){};
		\filldraw[fill=white] (north) circle (3pt) node {$\star$};
		\filldraw[fill=white] (south) circle (3pt) node {$\star$};
		\node[rotate=0,regular polygon,regular polygon sides=6, draw,inner sep=0]
		(Tn) at (0,0)
		{$\calT^{\tiny{\,i,j}}_{+}$};
		\draw[thick] (north.center) -- (Tn.north);
		\draw[thick] (south.center) -- (Tn.south);
		\node at (1,0) {$=$};
		\node (north2) at (2,1){};
		\node (south2) at (2,-1){};
		\filldraw[fill=white] (north2) circle (3pt) node {$\star$};
		\filldraw[fill=white] (south2) circle (3pt) node {$\star$};
		\node[rotate=0,regular polygon,regular polygon sides=6, draw,inner sep=0]
		(Tn2) at (2,0)
		{$\widetilde{\calT}^{\tiny{\,i,j}}_{+}$};
		\draw[thick] (north2.center) -- (Tn2.north);
		\draw[thick] (south2.center) -- (Tn2.south);
		\draw[thick] (north2.center)[out=0,in=90] to (2.75,0)[out=-90,in=0] to (south2.center);
	}
\caption{$\calT_+^{i,j}$ (left) and $\widetilde{\calT}^{\tiny{i,j}}_{+}$ (right).}
\label{fig:mT}	
\end{figure} 
If we replace the Tait graphs $\calT^{\,{n,j}}_+$ by $\widetilde{\calT}^{{\, n,j}}_+$ for $j=1,\ldots,m_n$ in Figure~\ref{fig:rT} and move the exterior edge across the rightmost string (consisting of the Tait graphs $\widetilde \calT_{+}^{\, {n,j}}$ for $j=1,\ldots,m_n$ and the vertices below them), then $\calT_+'$ is given as in Figure~\ref{fig:final}. The edge from the top vertex to the bottom vertex in Figure \ref{fig:final} separates edge-connected polygons of sizes
	\bean
	\{w(w_n)+m_n\}
	\;\cup\;
	\bigcup_{j=1}^{m_n} \{w(v)+e(v): v\in(\calV_{n,j})_+\}
	\eean on the right
	from the rest.
	If we apply the same procedure to the remaining strings on the left, we obtain that
	$\calT_+'$ consists of edge-connected polygons of sizes
	\bean
	\bigcup_{i=1}^{n}
	\bigcup_{j=1}^{m_i} \{w(v)+e(v): v\in(\calV_{i,j})_+\}
	\cup
	\bigcup_{i=1}^n \{w(w_i)+m_i\}
	\=
	\{w(v)+e(v): v\in\calV_{+}\}
	\eean
	since $m_i=e(w_i)$.
	\end{proof}

\begin{figure}
\tikzmath{\x1=-4; \x2=-1;\x3=3;}
\begin{tikzpicture}
  \node (north) at (0,5.5){};
  \node (south) at (0,-5){};
  \node[regular polygon,regular polygon sides=6, draw=black, inner sep=15, fill=white] (T11) at (\x1,3){};
  \node at (T11) {\rotatebox{180}{$\calT^{\,1,1}_{+}$}};
  \filldraw (\x1,2) circle (2pt);
  \filldraw (\x1,1) circle (2pt);
  \node (T1dots) at (\x1,1.5){$\vdots$};
  \filldraw (\x1,-1) circle (2pt);
  \node[regular polygon,regular polygon sides=6, draw=black, inner sep=15, fill=white] (T1m) at (\x1,0) {};
  \node at (T1m) {\rotatebox{180}{$\calT^{\,1,m_1}_{+}$}};

  \node (w1start) at (\x1,-1){};
  \filldraw (w1start) circle (2pt);
  \filldraw (\x1,-1.5) circle (2pt);
  \node[anchor=center] (w1dots) at (\x1,-2.25) {$\vdots$};
  \filldraw (\x1,-3) circle (2pt);
  \node (w1end) at (\x1,-3.5){};
  \filldraw (w1end) circle (2pt);

  \draw[thick] (north.center) -- (T11.north);
  \draw[thick] (T11.south) -- (T1dots) -- (T1m.north);
  \draw[thick] (T1m.south) -- (w1dots) -- (w1end.center) -- (south.center);

  \node[regular polygon,regular polygon sides=6, draw=black, inner sep=15, fill=white] (Tn1) at (\x2,3){};
  \node at (Tn1) {\rotatebox{180}{$\calT^{{\, n-1,1}}_{+}$}};
  \filldraw (\x2,2) circle (2pt);
  \filldraw (\x2,1) circle (2pt);
  \node (Tndots) at (\x2,1.5){$\vdots$};
  \filldraw (\x2,-1) circle (2pt);
  \node[regular polygon,regular polygon sides=6, draw=black, inner sep=16, fill=white] (Tnm) at (\x2,0){};
  \node at (Tnm) {\rotatebox{180}
  	{$
  		\calT
  		^{\text{\scalebox{.75}{$\, n-1,m_{n-1}$}}}
  		_{+}
  $}};

  \node (wnstart) at (\x2,-1){};
  \filldraw (wnstart) circle (2pt);
  \filldraw (\x2,-1.5) circle (2pt);
  \node[anchor=center] (wndots) at (\x2,-2.25) {$\vdots$};
  \filldraw (\x2,-3) circle (2pt);
  \node (wnend) at (\x2,-3.5){};
  \filldraw (wnend) circle (2pt);

  \draw[thick] (north.center) -- (Tn1.north);
  \draw[thick] (Tn1.south) -- (Tndots) -- (Tnm.north);
  \draw[thick] (Tnm.south) -- (wndots) -- (wnend.center) -- (south.center);

\node[anchor = center] at ({(\x1+\x2)/2},3) {$\cdots$};
\node[anchor = center] at ({(\x1+\x2)/2},0) {$\cdots$};

  \node[regular polygon,regular polygon sides=6, draw=black, inner sep=15, fill=white] (Tn1) at (\x3,3){};
  \node at (Tn1) {\rotatebox{180}{$\widetilde {\calT}^{\, n,1}_{+}$}};
  \filldraw (\x3,2) circle (2pt);
  \filldraw (\x3,1) circle (2pt);
  \node (Tndots) at (\x3,1.5){$\vdots $};
  \filldraw (\x3,-1) circle (2pt);
  \node[regular polygon,regular polygon sides=6, draw=black, inner sep=15, fill=white] (Tnm) at (\x3,0){};
  \node at (Tnm) {\rotatebox{180}{$\widetilde \calT^{\, n,m_n}_{+}$}};

  \node (wnstart) at (\x3,-1){};
  \filldraw (wnstart) circle (2pt);
  \filldraw (\x3,-1.5) circle (2pt);
  \node[anchor=center] (wndots) at (\x3,-2.25) {$\vdots$};
  \filldraw (\x3,-3) circle (2pt);
  \node (wnend) at (\x3,-3.5){};
  \filldraw (wnend) circle (2pt);

  \draw[thick] (north.center) -- (Tn1.north);
  \draw[thick] (Tn1.south) -- (Tndots) -- (Tnm.north);
  \draw[thick] (Tnm.south) -- (wndots) -- (wnend.center) -- (south.center);

\draw[thick]
  (north.center) to[out=-70, in=90]
  (1,0) to[out=-90,in=70]
  (south.center);
\draw[thick]
  (north.center) to[out=-45, in=180]
  (\x3,2) to[out=180,in=90] ({\x3-.5},1.75);
\draw[thick]
  ({\x3-.5},1.25) to[out=-90, in=180]
  (\x3,1) to[out=180,in=90]
  (\x3-1.1,0) to[out=-90 ,in=180]
  (\x3,-1);

  \filldraw[fill=white] (north) circle (3pt) node[scale=1] {$\star$};
  \filldraw[fill=white] (south) circle (3pt) node {$\star$};
  
  \end{tikzpicture}
\caption{The reduced Tait graph $\calT_+'$ of the tangle for $\Gamma$.}
\label{fig:final}	
\end{figure}
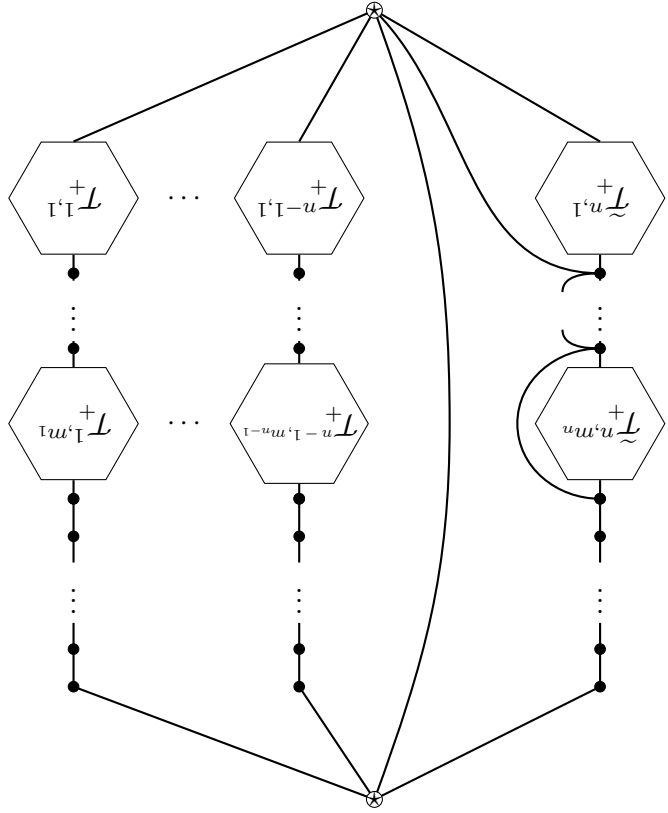

\begin{proposition}\label{prp:taitgarph_arbo_zeroless}
Let $\Gamma$ and $L$ be as in Theorem~\ref{main} and assume that $|\calV|\geq 2$. Then the reduced Tait graph $\calT_+$ of $L$ consists of edge-connected polygons of sizes $w(v) + e(v)$ where $v\in\calV_+$.
\end{proposition}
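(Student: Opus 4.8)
The plan is to derive this statement about the closed link $L$ from the corresponding statement for arborescent tangles in Lemma~\ref{lma:edgecon}, using the dictionary of Remark~\ref{T2T} between the Tait graphs of a tangle and those of its closure. Since $|\calV|\geq 2$ and $\Gamma$ is a connected tree whose two bipartition classes are exactly $\calV_+$ and $\calV_-$, each class is nonempty (the endpoints of any edge lie in different classes); in particular $\calV_-\neq\emptyset$. The natural move is then to choose a root $v_0\in\calV_-$: this makes $\Gamma$ a weighted rooted tree with associated arborescent tangle $T$ whose closure is $L$, and it is the correct choice because Lemma~\ref{lma:edgecon} takes as hypotheses $|\calV|\geq 2$, $v_0\in\calV_-$ and $0\notin w(\calV_-)$ --- all of which hold here --- and produces information about $\calT_+'$, which is precisely the reduced Tait graph governing $\Phi_L$. (Rooting in $\calV_+$ instead would demand the unavailable hypothesis $0\notin w(\calV_+)$.)

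With $v_0\in\calV_-$ fixed, Lemma~\ref{lma:edgecon} gives that the reduced Tait graph $\calT_+'$ of the tangle $T$ consists of edge-connected polygons of sizes $\{w(v)+e(v):v\in\calV_+\}$ and, in addition, has an edge joining its two marked vertices; note that since no $v\in\calV_+$ is the root, $e(v)$ here is just the degree of $v$ in $\Gamma$, matching the statement to be proved. To transfer this to $L$, observe that since $v_0\in\calV_-$ the North and South faces of $T$ are colored $+$, i.e.\ we are in the case $\varepsilon=+$ of Remark~\ref{T2T}, and the vertical Tait graph of $T$ is precisely $\calT_+$ of $T$ (consistent with Figure~\ref{fig:rT}, which is drawn with $\star$-marked North and South vertices). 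Remark~\ref{T2T} then says that the reduced $+$-Tait graph of $L$ is obtained from the reduced vertical Tait graph of $T$ simply by unmarking the two marked vertices. Since unmarking leaves the underlying graph unchanged, the reduced $+$-Tait graph of $L$ carries exactly the edge-connected polygon structure of Lemma~\ref{lma:edgecon}, with polygon sizes $w(v)+e(v)$, $v\in\calV_+$, which is the claim.

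I expect the real content of the Proposition to lie entirely in Lemma~\ref{lma:edgecon} (and, behind it, Proposition~\ref{prp:taitgarph_arbo} and Lemma~\ref{lma:pf1}); from that vantage point the present statement is essentially a corollary, and the only steps that need care are purely bookkeeping: identifying which Tait graph of $T$ is ``vertical'' as a function of the sign class of the chosen root, and noting that the extra edge between the two marked vertices appearing in Lemma~\ref{lma:edgecon} is one of the polygon edges, not a genuinely new edge, so that the clean polygon description is not disturbed upon closing up. As a sanity check, take $K=5_2$ with root $v_1$ of weight $-2$ (so $0\notin w(\calV_-)=\{-2\}$ and $\calV_+=\{v_2\}$ with $w(v_2)+e(v_2)=4$): by Example~\ref{ex:tangletait} the reduced vertical Tait graph of the tangle is a $4$-gon with a chord between the two marked vertices, and unmarking recovers the $4$-cycle $\calT_+'$ of $5_2$ shown in Figure~\ref{fig:dTrT}.
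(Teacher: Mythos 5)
Your proposal is correct and follows essentially the same route as the paper's own proof: choose a root $v_0\in\calV_-$, apply Lemma~\ref{lma:edgecon} to the resulting rooted tree, and pass to the closure via Remark~\ref{T2T} by unmarking the marked vertices. The extra bookkeeping you supply (nonemptiness of $\calV_-$, identifying the vertical Tait graph, and noting the marked-vertex edge is already a polygon edge) is consistent with, and slightly more explicit than, the paper's argument.
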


\begin{proof}
 Choose $v_0\in\calV_-$ and consider the alternating weighted rooted tree $\Gamma$ with root $v_0$.
 By Lemma~\ref{lma:edgecon}, the reduced $+$-Tait graph for $\Gamma$ consist of edge-connected polygons of sizes $\{w(v) + e(v) : v\in\calV_+\}$ which agrees with the Tait graph $\calT_+$ of $L$ after unmarking the marked vertices, cf.  Remark~\ref{T2T}.
\end{proof}

For our discussion in Section 5, we prove the following result.
\begin{corollary}\label{cor:0inwVTait}
Let $\Gamma$ be an alternating weighted tree with arborescent link $L$ and Tait graph $\calT_+$ of $L$. Then $\calT_+$ is not the edge-connected sum of polygons if and only if $0\in w(\calV_-)$.
\end{corollary}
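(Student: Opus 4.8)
The plan is to read off the shape of $\calT_+$ from Proposition \ref{prp:taitgarph_arbo} and compare it with the edge-connected sum of polygons produced in Proposition \ref{prp:taitgarph_arbo_zeroless}; the two implications split exactly along the dichotomy ``the weighted tree has a weight-$0$ vertex in $\calV_-$'' versus not.

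\emph{If $0\notin w(\calV_-)$ then $\calT_+$ is an edge-connected sum of polygons.} For $|\calV|\geq2$ this is precisely Proposition \ref{prp:taitgarph_arbo_zeroless}. For $|\calV|=1$ the unique vertex $v_0$ is a leaf, so $w(v_0)\neq0$ by reducedness; as $0\notin w(\calV_-)$ forbids $v_0\in\calV_-$, we have $v_0\in\calV_+$, the link $L$ is the closure of the integer tangle $w(v_0)$, and $\calT_+$ is a single $w(v_0)$-gon (Example \ref{ex:tangletait}(1) and Remark \ref{T2T}). By contraposition this is the ``only if'' direction of the corollary.

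\emph{If $0\in w(\calV_-)$ then $\calT_+$ is not an edge-connected sum of polygons.} Choose $v_0\in\calV_-$ with $w(v_0)=0$. Since $\Gamma$ is reduced, $v_0$ has degree $n\geq3$, and since $\Gamma$ is alternating all $n$ neighbours $v_1,\dots,v_n$ of $v_0$ lie in $\calV_+$; root $\Gamma$ at $v_0$ with branches $\Gamma_1,\dots,\Gamma_n$. Applying Proposition \ref{prp:taitgarph_arbo} with the sign $\varepsilon$ for which $v_0\in\calV_\varepsilon$ (so $-\varepsilon=+$) and then unmarking the marked vertices as in Remark \ref{T2T}, the graph $\calT_+$ of $L$ is obtained by joining two vertices $t,b$ by the $n$ internally disjoint connected subgraphs $\calT^1_+,\dots,\calT^n_+$ (the $+$-Tait graphs of the tangles of $(\Gamma_i,v_i)$) together with $|w(v_0)|=0$ further $t$--$b$ edges. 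Thus $\calT_+$ is a ``generalized theta graph'': two vertices joined by $n\geq3$ disjoint arcs, with no $t$--$b$ edge coming from the root.

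It then remains to see that such a graph cannot be an edge-connected sum of polygons. An edge-connected sum of polygons is outerplanar — gluing a polygon onto a plane graph along one boundary edge keeps every vertex on the outer face — and hence contains no subdivision of $K_{2,3}$; so it suffices to produce three internally disjoint $t$--$b$ paths of length $\geq2$. By Proposition \ref{prp:taitgarph_arbo} applied to the branches (whose roots $v_i$ lie in $\calV_+$), the reduced form of $\calT^i_+$ is a single edge exactly when $\Gamma_i$ is a single vertex of weight $1$, and otherwise contains a $t$--$b$ path of length $\geq2$; when at least three branches are of the latter type we are done. In the remaining degenerate situation at least two branches are weight-$1$ leaves, so reducing $\calT_+$ collapses parallel $t$--$b$ edges; here — using $|\calV_+|\geq n\geq3$ — one checks directly that the resulting reduced Tait graph (a polygon, a single edge, or an edge-connected sum of fewer than $|\calV_+|$ polygons) cannot equal the edge-connected sum of the $|\calV_+|$ polygons of sizes $\{w(v)+e(v):v\in\calV_+\}$ prescribed by Theorem \ref{main}. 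Carrying out this degenerate case cleanly — i.e.\ pinning down in exactly what sense $\calT_+$ ``is not the edge-connected sum of polygons'' when the arcs degenerate — is the step I expect to be the crux; everything else is bookkeeping with Propositions \ref{prp:taitgarph_arbo} and \ref{prp:taitgarph_arbo_zeroless} together with the planarity of Tait graphs.
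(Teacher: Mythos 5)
Your forward direction coincides with the paper's (a direct appeal to Proposition~\ref{prp:taitgarph_arbo_zeroless}), and your reduction of the converse to the shape of the Tait graph rooted at the weight-$0$ vertex --- $n=\deg(v_0)\geq 3$ internally disjoint strings between two branch vertices and \emph{no} direct edge between them because $w(v_0)=0$ --- is exactly the paper's starting point (it uses Lemma~\ref{lma:pf1} and Figure~\ref{fig:rT}; your use of Proposition~\ref{prp:taitgarph_arbo} is equivalent). The gap is in the criterion you then apply: the claim that an edge-connected sum of polygons is outerplanar, hence contains no subdivision of $K_{2,3}$, is false. In this paper's notion of edge-connected sum, several polygons may be glued along one and the same edge; for example, the reduced $+$-Tait graph of $9_{16}$ (Figure~\ref{fig:T916}) is the edge-connected sum of polygons of sizes $4,4,3$ all sharing the single top--bottom edge, and its three internally disjoint top--bottom paths of length at least $2$ already form a subdivided $K_{2,3}$. (Your justification breaks at the second polygon glued to a given edge: that edge is then no longer on the outer face.) Consequently, exhibiting three long disjoint paths proves nothing, and the main case of your converse --- not only the degenerate one --- is unestablished. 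The feature that actually separates the two situations is not the presence of three paths but the \emph{absence} of the direct top--bottom edge: the paper argues that, since $w(v_0)=0$ contributes no edge between the two branch vertices while $\deg(v_0)\geq 3$ by Remark~\ref{reduced}~(ii), no edge of the graph in Figure~\ref{fig:rT} can serve as a gluing edge decomposing $\calT_+$ into polygons.

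On the degenerate case: you are right to flag that strings collapsing to single edges are delicate (indeed the paper's own one-line argument does not spell this out, and when two or more strings reduce to parallel top--bottom edges the ``no decomposing edge'' claim needs genuine care), but you explicitly leave this case unresolved, so even granting your outerplanarity lemma the proof would be incomplete. As it stands, the converse direction is missing its key step; repairing it requires replacing the $K_{2,3}$ obstruction by the paper's observation about the missing gluing edge at the cut pair $\{\text{top},\text{bottom}\}$.
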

\begin{proof}
If $0\notin w(\calV_-)$, then Proposition~\ref{prp:taitgarph_arbo_zeroless} implies that $\calT_+$ consists of edge-connected polygons. Conversely, if $v_0\in\calV_-$ with $w(v_0)=0$, then Lemma~\ref{lma:pf1} implies that the Tait graph $\calT_+$ of $\Gamma$ with root $v_0$ is given as in Figure~\ref{fig:rT} without an edge from the top vertex to bottom vertex because $w(v_0)=0$. By Remark~\ref{reduced} (ii), the degree of $v_0$ is greater than $2$. Thus, $\calT_+$ is not the edge-connected sum of polygons because none of the edges drawn in Figure~\ref{fig:rT} decomposes $\calT_+$ into polygons.
\end{proof}

We are now in a position to prove our main result.

\begin{proof}[Proof of Theorem \ref{main}]
Let $m \in \mathbb{N}$. Let $L$ be an alternating link such that its reduced Tait graph $\calT_{+}'$ consists of $m$ edge-connected
polygons of sizes $b_i\geq 2$, $i=1,\ldots,m$. According to \cite[Theorem~2]{ad1}, $\Phi_L(q)$ is uniquely determined by $\mathcal T'_+$. From \cite[Theorem~5.1]{ad1}, we have 
 \bea \label{key}
\Phi_L(q) \= \prod_{i=1}^m h_{b_i}.
\eea
The result now follows from Proposition \ref{prp:taitgarph_arbo_zeroless} and (\ref{key}).
\end{proof}

\section{Applications of Theorem \ref{main}}
In this section, we give some consequences of Theorem~\ref{main}. Since arborescent links are the same as algebraic links, the subsequent results are presented using Conway's notation \cite{ac}.

\subsection{2--bridge knots}
A \emph{2--bridge knot} (or {\it rational knot}) $K$ can be constructed from a weighted tree given in Figure \ref{fig:2b}
\begin{figure}[ht]
\begin{tikzpicture}
\draw[thick] (0,0) -- (3.5,0);
\filldraw [black] (0,0) circle (2pt) node[anchor=south] {$-d_1$};
\filldraw [black] (1.5,0) circle (2pt) node[anchor=south] {$d_{2}$};
\filldraw [black] (3,0) circle (2pt) node[anchor=south] {$-d_{3}$};
\node at (4,0) {$\ldots$};
\draw[thick] (4.5,0) -- (5,0);
\filldraw [black] (5,0) circle (2pt) node[anchor=south] {$(-1)^{n} d_{n}$};
\end{tikzpicture}
\caption{A weighted tree for $2$-bridge knots.}
\label{fig:2b}	
\end{figure}
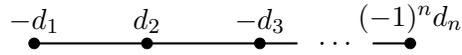
for $d_1,\ldots, d_n\in\Z_{>0}$. The Conway notation for $K$ is $[d_n \dots d_1]$. Here, the vertices $v_j$ have weights $d_j$. We partition the vertices into the sets \hbox{$\calV_+ = \{v_j: j \text{ even}\}$} and \hbox{$\calV_-=\{v_j: j \text{ odd}\}$}. We have $e(v_j) = 1$ if $j=1$ or $n$ and $e(v_j)=2$ otherwise. With the vector 
\bean
 (b_1, \dotsc, b_n)
 \coloneqq
 (d_1 + e(v_1), \dotsc, d_n + e(v_n)),
\eean
Theorem~\ref{main} immediately implies the following result.
\begin{corollary} \label{2b}
If $K$ is a 2--bridge knot as above, then
\begin{equation*} \label{eq:rationaltails}
\Phi_K(q) \,= \prod_{\substack{j=1 \\ j \text{ even}}}^{n}
h_{b_j},\qquad\quad
\Phi_{K^*}(q) \,= \prod_{\substack{j=1 \\  j \text{ odd}}}^{n} h_{b_j}.
\end{equation*}
\end{corollary}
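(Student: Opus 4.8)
The plan is to derive Corollary~\ref{2b} as a direct specialization of Theorem~\ref{main} to the weighted tree in Figure~\ref{fig:2b}, so almost all of the work has already been done. First I would record that the weighted tree in Figure~\ref{fig:2b} is a path on $n$ vertices $v_1,\ldots,v_n$ with weights $(-1)^{j+1}d_j$ (equivalently, alternating signs starting from $-d_1$), and that it is reduced since each $d_j>0$ means no vertex has weight $0$. Because the weights strictly alternate in sign along the path, the bipartition $\calV = \calV_+\cup\calV_-$ with $\calV_+=\{v_j:j\text{ even}\}$ and $\calV_-=\{v_j:j\text{ odd}\}$ satisfies $\pm w(\calV_\pm)\geq 0$, so $\Gamma$ is alternating in the sense of the paper. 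Moreover $0\notin w(\calV_-)$ (indeed $0\notin w(\calV)$) precisely because all $d_j>0$, and likewise $0\notin w(\calV_+)$, so both Theorem~\ref{main} and its mirror version Remark~\ref{rem1} apply.

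Next I would compute the edge numbers $e(v_j)$: since $\Gamma$ is a path, $e(v_1)=e(v_n)=1$ and $e(v_j)=2$ for $2\leq j\leq n-1$, which is exactly what is asserted before the corollary. For $v_j\in\calV_+$ (i.e.\ $j$ even), the weight is $w(v_j)=d_j>0$, so $w(v_j)+e(v_j)=d_j+e(v_j)=b_j$ in the notation introduced just before the statement. Plugging into \eqref{form} gives
\begin{equation*}
\Phi_L(q) \= \prod_{v\in\calV_+} h_{w(v)+e(v)} \= \prod_{\substack{j=1\\ j\text{ even}}}^n h_{b_j},
\end{equation*}
which is the first claimed identity (with $L=K$). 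For the mirror, $K^*$ corresponds to the weighted tree with all signs flipped; the roles of $\calV_+$ and $\calV_-$ interchange, and for $v_j\in\calV_-$ (i.e.\ $j$ odd) the relevant quantity is $-w(v_j)+e(v_j)=d_j+e(v_j)=b_j$. Applying \eqref{mform} from Remark~\ref{rem1}, whose hypothesis $0\notin w(\calV_+)$ holds, yields
\begin{equation*}
\Phi_{K^*}(q) \= \prod_{v\in\calV_-} h_{-w(v)+e(v)} \= \prod_{\substack{j=1\\ j\text{ odd}}}^n h_{b_j}.
\end{equation*}

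There is essentially no obstacle here: the only points requiring a sentence of care are (i) verifying that the Conway notation $[d_n\cdots d_1]$ in Figure~\ref{fig:2b} genuinely produces a 2-bridge knot and that this linear weighted tree is alternating and reduced, which is standard and follows from the construction in Section~\ref{ak}, and (ii) matching the index conventions so that the even-indexed vertices are the ones in $\calV_+$ for $K$ and in $\calV_-$ after the sign flip for $K^*$. Both are bookkeeping. Thus the proof is simply: \emph{the hypotheses of Theorem~\ref{main} and Remark~\ref{rem1} are satisfied, and substituting the values $e(v_j)$ and $|w(v_j)|=d_j$ into \eqref{form} and \eqref{mform} gives the two displayed formulas.}
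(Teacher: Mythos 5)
Your proposal is correct and is essentially identical to the paper's own (very brief) derivation: verify that the path-shaped tree is reduced and alternating with $\calV_+=\{v_j:j\text{ even}\}$, note $0\notin w(\calV_\pm)$ since all $d_j>0$, compute $e(v_j)$, and substitute into \eqref{form} and \eqref{mform}. The only blemish is the harmless typo $(-1)^{j+1}d_j$ for the weights (it should be $(-1)^{j}d_j$, as your parenthetical ``starting from $-d_1$'' and the rest of your argument correctly assume).
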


\begin{example}
(1) In Example~\ref{ex:arborescent_knots} (1), the knot $K=5_2$ was constructed from the weighted tree given in Figure \ref{fig:w52}. Thus, the Conway notation is $[3\,2]$ and so by Corollary \ref{2b}, we have
\bea \label{eq:tail52}
\Phi_{5_2}(q) \,=\, h_4,\qquad\quad
\Phi_{5_2^{*}}(q) \,=\, h_3.
\eea
Equation (\ref{eq:tail52}) also follows from (\ref{key}) and the Tait graphs of $K=5_2$ given in Figure~\ref{fig:dTrT}.

\smallskip

\noindent (2) Let $K=7_4$ which has Conway notation $[3\,1\,3]$. An associated weighted tree is given by Figure \ref{fig:w74} and so by Corollary \ref{2b}, we have
\bea\label{eq:tail76}
\Phi_{7_4}(q) \,=\, h_3,\qquad\quad
\Phi_{7_4^{*}}(q) \,=\, h_4^2.
\eea

\begin{figure}[ht]
\begin{equation*}
\begin{tikzpicture}
\draw[thick] (0,0) -- (2,0);
\filldraw (0,0) circle (2pt) node[anchor=south] {$-3$};
\filldraw (1,0) circle (2pt) node[anchor=south] {$1$};
\filldraw (2,0) circle (2pt) node[anchor=south] {$-3$};
\end{tikzpicture}
\end{equation*}
\caption{A weighted tree for $7_4$.}
\label{fig:w74}	
\end{figure}
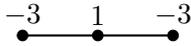
\noindent The reduced Tait graphs $\mathcal T_{\pm}^{\prime}$ of $K=7_4$ are given in Figure \ref{fig:rT76} and so (\ref{eq:tail76}) also follows from~(\ref{key}). 

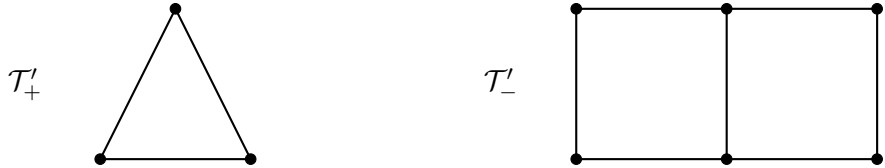
\begin{figure}[ht]
\begin{minipage}{.45\textwidth}
\begin{equation*}
\begin{tikzpicture}[scale=1]
    \node at(-2,0){$\mathcal T_{+}^{\prime}$};
    \filldraw[black] (0,1) circle (2pt) node (a){};
    \filldraw [black] (1,-1) circle (2pt) node (b){};
    \filldraw [black] (-1,-1) circle (2pt) node (d){};
    \draw[thick] (a.center) -- (b.center);
    \draw[thick] (d.center) -- (a.center);
    \draw[thick] (d.center) -- (b.center);
\end{tikzpicture}
\end{equation*}
\end{minipage}
\begin{minipage}{.45\textwidth}
\begin{equation*}
\begin{tikzpicture}[scale=1]
\node at(-3,0){$\mathcal T_{-}^{\prime}$};
\filldraw[black] (0,1) circle (2pt) node (a){};
\filldraw [black] (0,-1) circle (2pt) node (b){};
\filldraw [black] (-2,1) circle (2pt) node (c){};
\filldraw [black] (2,1) circle (2pt) node (d){};
\filldraw [black] (2,-1) circle (2pt) node (e){};
\filldraw [black] (-2,-1) circle (2pt) node (f){};
\draw[thick] (a.center) -- (b.center);
\draw[thick] (f.center) -- (b.center);
\draw[thick] (c.center) -- (a.center);
\draw[thick] (d.center) -- (a.center);
\draw[thick] (e.center) -- (b.center);
\draw[thick] (d.center) -- (e.center);
\draw[thick] (f.center) -- (c.center);
\end{tikzpicture}
\end{equation*}
\end{minipage}
\caption{The reduced Tait graphs for $7_4$.}
\label{fig:rT76}
\end{figure}
\end{example}

\subsection{Montesinos knots}
A \emph{Montesinos knot} $K$ can be constructed by a star-shaped weighted tree \cite[Section 17.6.2]{afhkln} as in Figure \ref{fig:M} with center $k\in\Z_{\geq 0}$, $m\in\Z_{\geq 1}$ rays of length $n_i\in\Z_{\geq 1}$ and $d^{(i)}_{1},
  d^{(i)}_{2}, 
  \cdots
  \,d^{(i)}_{n_i} 
  \in\Z_{\geq 1}$ for $i=1,\ldots,m$. The Conway notation for $K$ is
\bean[]
[\mathbf d^{(1)};\,\mathbf d^{(2)};\ldots;\,\mathbf d^{(m)} +^k]
\eean
where $+^k$ means $k$ copies of $+$ and $\mathbf d^{(i)}$ denotes the concatenation of the entries in the $i$th ray. For $v\in\mathcal V$, we have
\bean
  e(v) \= \begin{cases}
    m &\text{if $v$ is the center,}\\
    1 &\text{if $v$ is a leaf,}\\
    2 &\text{otherwise.}
  \end{cases}
 \eean

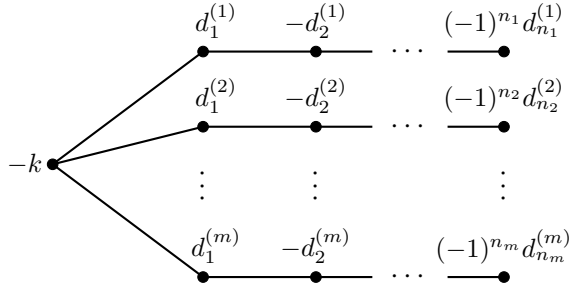
\begin{figure}
\begin{equation*}
\tikzmath{\llll = 0.05;}
\begin{tikzpicture}
\node (k) at (-.5,0){};
\filldraw [black] (k) circle (2pt) node[anchor=east] {\small$-k$};

\draw[thick] (k.center) -- (1.5,1.5) -- (3.75,1.5);
\filldraw [black] (1.5,1.5) circle (2pt)
  node[anchor=south] at(1.5,1.5+\llll)
  {\small $\quad d_{1}^{(1)}$};
\filldraw [black] (3,1.5) circle (2pt)
  node[anchor=south] at(3,1.5+\llll)
  {\small$-d_{2}^{(1)}$};
\node at (4.25,1.5) {$\cdots$};
\filldraw [black] (5.5,1.5) circle (2pt)
  node[anchor=south] at (5.5,1.5+\llll)
  {\small$(-1)^{n_1} d_{n_1}^{(1)}$};
\draw[thick] (4.75,1.5) -- (5.5,1.5);

\draw[thick] (k.center) -- (1.5,.5)--(3.75,0.5);
\filldraw [black] (1.5,0.5) circle (2pt)
  node[anchor=south] at (1.5,0.5+\llll)
  {\small $\quad d_{1}^{(2)}$};
\filldraw [black] (3,0.5) circle (2pt)
  node[anchor=south] at (3,0.5+\llll)
  {\small$-d_{2}^{(2)}$};
\node at (4.25,0.5) {$\cdots$};
\filldraw [black] (5.5,0.5) circle (2pt)
  node[anchor=south] at (5.5,0.5+\llll)
  {\small$(-1)^{n_2} d_{n_2}^{(2)}$};
\draw[thick] (4.75,0.5) -- (5.5,0.5);

\draw[thick] (k.center) -- (1.5,-1.5) -- (3.75,-1.5);
\filldraw [black] (1.5,-1.5) circle (2pt)
  node[anchor=south] at (1.5,-1.5+\llll)
  {\small $\quad d_{1}^{(m)}$};
\filldraw [black] (3,-1.5) circle (2pt)
  node[anchor=south] at (3,-1.5+\llll)
  {\small$-d_{2}^{(m)}$};
\node at (4.25,-1.5) {$\cdots$};
\filldraw [black] (5.5,-1.5) circle (2pt)
  node[anchor=south] at(5.5,-1.5+\llll)
  {\small$(-1)^{n_m} d_{n_m}^{(m)}$};
\draw[thick] (4.75,-1.5) -- (5.5,-1.5);

\node at (5.5,-.25) {{\rotatebox{90}{$\cdots$}}};
\node at (1.5,-.25) {{\rotatebox{90}{$\cdots$}}};
\node at (3,-.25) {{\rotatebox{90}{$\cdots$}}};

\end{tikzpicture}
\end{equation*}
\caption{A weighted tree for Montesinos knots.}
\label{fig:M}
\end{figure}
\noindent For $i=1,\ldots,m$ and $j=1, \dotsc, n_{i}$, define
\begin{equation*}
b_j^{(i)} = d^{(i)}_{n_j} + e(v_{i, n_j})
\end{equation*}
where $v_{i, n_j}$ is the $j$th vertex in the $i$th ray. Another application of Theorem \ref{main} is the following.
\begin{corollary} \label{mk} 
If $K$ is a Montesinos knot as above, then 
  \bean
    \Phi_{K^*}(q) \= 
    h_{m+k} \prod_{i=1}^m\ 
    \prod_{\substack{j=1\\j \text{ even}}}^{n_i}}
    h_{b_j^{(i)}
  \eean
  and, if $k\neq 0$,
  \bean
    \Phi_{K}(q) \= 
    \prod_{i=1}^m\ 
    \prod_{\substack{j=1 \\ j\text{ odd}}}^{n_i}}
    h_{b_j^{(i)}.
  \eean
\end{corollary}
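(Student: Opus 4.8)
The plan is to directly apply Theorem~\ref{main} to the star-shaped weighted tree $\Gamma$ of Figure~\ref{fig:M}, tracking carefully which vertices land in $\calV_+$ and which in $\calV_-$, and verifying the hypothesis $0\notin w(\calV_-)$ (respectively $0\notin w(\calV_+)$ for the mirror). First I would fix the bipartition: reading along each ray, the weights alternate in sign starting with $d_1^{(i)}>0$, so the vertices at odd positions in each ray have positive weights and the vertices at even positions have negative weights; the center has weight $-k\leq 0$. Consequently $\calV_+$ consists of the center (if we are allowed to place it there, i.e.\ when we consider the mirror so that $-(-k)=k\geq 0$ sits on the $+$ side — more precisely, for $\Phi_{K^*}$ one uses the tree with all signs flipped, whose center has weight $+k\geq 0$) together with the odd-position ray vertices, while $\calV_-$ consists of the even-position ray vertices (all of strictly negative weight by the reducedness assumption on $\Gamma$, cf.\ Remark~\ref{reduced}~(ii)). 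Since every even-position ray vertex has degree $2$, reducedness forces its weight to be nonzero, hence $0\notin w(\calV_-)$ in the flipped tree: the hypothesis of Theorem~\ref{main} (in the form of Remark~\ref{rem1}) is met, giving $\Phi_{K^*}(q)=\prod_{v\in\calV_+}h_{w(v)+e(v)}$ for the flipped tree.

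Next I would simply read off the product. The center contributes $h_{k+m}$ since $e(\text{center})=m$ and its (flipped) weight is $k$. Each odd-position vertex $v_{i,j}$ in ray $i$ contributes $h_{d_j^{(i)}+e(v_{i,j})}=h_{b_j^{(i)}}$ by the definition of $b_j^{(i)}$, where $e(v_{i,j})=1$ if $j=n_i$ and $e(v_{i,j})=2$ otherwise. Collecting these over all rays $i=1,\dots,m$ and all odd $j$ yields exactly
\[
\Phi_{K^*}(q)=h_{m+k}\prod_{i=1}^m\prod_{\substack{j=1\\ j\text{ odd}}}^{n_i}h_{b_j^{(i)}},
\]
which is the first claimed formula. (One must double-check the indexing convention in the statement, where $b_j^{(i)}$ is defined via $d^{(i)}_{n_j}$; I would reconcile this with the natural ``$j$th vertex in the $i$th ray'' reading so that the product over odd $j$ picks out precisely the positive-weight vertices.)

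For the second formula I would apply Theorem~\ref{main} directly to $\Gamma$ itself (not the flipped tree). Now $\calV_-$ contains the center, which has weight $-k$, and the even-position ray vertices. The even-position ray vertices again have nonzero weight by reducedness, but the center has weight $-k$, which is $0$ precisely when $k=0$; hence the hypothesis $0\notin w(\calV_-)$ holds exactly under the assumption $k\neq 0$ in the statement, and Theorem~\ref{main} gives $\Phi_K(q)=\prod_{v\in\calV_+}h_{w(v)+e(v)}$, where now $\calV_+$ is just the odd-position ray vertices, yielding $\Phi_K(q)=\prod_{i=1}^m\prod_{j\text{ odd}}h_{b_j^{(i)}}$. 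The only real subtlety — and the step I expect to need the most care — is the bookkeeping: correctly identifying which positions along each ray have positive versus negative weight, confirming that the reducedness hypothesis rules out weight-$0$ degree-$2$ vertices (so the ray vertices never obstruct the hypothesis), and checking the edge counts $e(v)$ at leaves, internal ray vertices, and the center against the formula for $b_j^{(i)}$. Everything else is a mechanical substitution into \eqref{form} and \eqref{mform}.
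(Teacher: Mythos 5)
Your overall strategy is exactly the paper's: Corollary \ref{mk} is presented there as a direct substitution into Theorem \ref{main} and Remark \ref{rem1} for the star-shaped tree of Figure \ref{fig:M}, and your verification of the hypotheses ($0\notin w(\calV_+)$ always holds for the mirror since the ray weights are in $\Z_{\geq 1}$, while $0\notin w(\calV_-)$ requires $k\neq 0$ because of the center) and your derivation of the second formula are correct.

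However, the derivation of the first formula contains a genuine sign error. You correctly observe that in $\Gamma$ itself the odd-position ray vertices carry the positive weights $d_j^{(i)}$, the even-position ones the negative weights $-d_j^{(i)}$, and the center the weight $-k$. Consequently, in the flipped tree (equivalently, when reading off $\prod_{v\in\calV_-}h_{-w(v)+e(v)}$ from Remark \ref{rem1}) the vertices contributing to $\Phi_{K^*}$ are the center together with the \emph{even}-position ray vertices, not the odd-position ones. Your text instead places the odd-position vertices in $\calV_+$ of the flipped tree and arrives at
\[
\Phi_{K^*}(q)\=h_{m+k}\prod_{i=1}^m\ \prod_{\substack{j=1\\ j\ \text{odd}}}^{n_i}h_{b_j^{(i)}},
\]
which you assert is ``the first claimed formula''; but the statement has the product over \emph{even} $j$, and your version is false. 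For instance, for $K=9_{16}$ (center $-1$, three rays of length one, so $m=3$, $k=1$, $b_1^{(1)}=b_1^{(2)}=4$, $b_1^{(3)}=3$) your formula would give $\Phi_{9_{16}^*}=h_4\cdot h_4^2h_3$ instead of the correct $\Phi_{9_{16}^*}=h_4$ of Example \ref{ex:tails}. The fix is simply to swap the parities in your bipartition of the flipped tree; the parenthetical in which you propose to ``reconcile'' the indexing so that the odd-$j$ product picks out the contributing vertices resolves the ambiguity in the wrong direction for the mirror. (The odd-$j$ product is the one that appears in $\Phi_K$, which you do obtain correctly.)
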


\begin{example}\label{ex:tails}
Consider the Montesinos knot $K=9_{16}$ which has Conway notation $[3;3;2+]$. A weighted tree and diagram for $9_{16}$ are given in Figure \ref{fig:916}. By Corollary \ref{mk}, we have
\bea \label{tail916}
\Phi_{9_{16}}(q) \= h_4^2 h_3,\qquad
\Phi_{9_{16}^*} \= h_4.
\eea
The reduced Tait graphs of $9_{16}$ are given in Figure \ref{fig:T916} and so (\ref{tail916}) also follows from (\ref{key}).
\begin{figure}[ht]
\begin{equation*}
\tikzmath{\sc = 1.25;} 
\begin{tikzpicture}
\draw[thick] (-\sc,0) -- (\sc,0);
\draw[thick] (0,0) -- (0,-\sc);
\filldraw (-\sc,0) circle (2pt) node[anchor=south] {$3$};
\filldraw (0,0) circle (2pt) node[anchor=south] {$-1$};
\filldraw (\sc,0) circle (2pt) node[anchor=south] {$3$};
\filldraw (0,-\sc) circle (2pt) node[anchor=north] {$2$};
\end{tikzpicture}
\hspace{100pt}
\includegraphics[width=.2\textwidth]{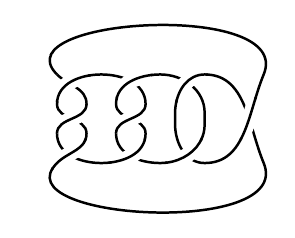}
\end{equation*}
\caption{A weighted tree and diagram for $9_{16}$.}
\label{fig:916}
\end{figure}

\begin{figure}[ht]
\begin{minipage}{.45\textwidth}
\begin{equation*}
\begin{tikzpicture}[scale=1]
    \node at(-2,0){$\mathcal T_+'$};
    \filldraw[black] (0,3/2) circle (2pt) node (a){};
    \filldraw [black] (0,-3/2) circle (2pt) node (b){};
    \filldraw [black] (-1,1/2) circle (2pt) node (c){};
    \filldraw [black] (-1,-1/2) circle (2pt) node (c2){};
    \filldraw [black] (1,0) circle (2pt) node (d){};
    \filldraw [black] (0,1/2) circle (2pt) node (e){};
    \filldraw [black] (0,-1/2) circle (2pt) node (e2){};

    \draw[thick] (a.center) -- (b.center);
    \draw[thick] (c.center) -- (c2.center);
    \draw[thick] (c.center) -- (a.center);
    \draw[thick] (c2.center) -- (b.center);
    \draw[thick] (a.center) -- (e.center) -- (e2.center) -- (b.center);
    \draw[thick] (a.center) -- (d.center) -- (b.center);
    \draw[thick] (a.center)
      to[out=0,in=90] (2,0)
      to[out=-90,in=0] (b.center);
\end{tikzpicture}
\end{equation*}
\end{minipage}
\begin{minipage}{.45\textwidth}
\begin{equation*}
\begin{tikzpicture}[scale=1]
    \node at(-1,2/2){$\mathcal T_-'$};
    \filldraw[black] (0,0) circle (2pt) node (a){};
    \filldraw [black] (2,0) circle (2pt) node (b){};
    \filldraw [black] (0,2) circle (2pt) node (c){};
    \filldraw [black] (2,2) circle (2pt) node (d){};
    \draw[thick] (a.center) -- (b.center);
    \draw[thick] (c.center) -- (d.center);
    \draw[thick] (a.center) -- (c.center);
    \draw[thick] (b.center) -- (d.center);
\end{tikzpicture}
\end{equation*}
\end{minipage}
\caption{The reduced Tait graphs for $9_{16}$.}
\label{fig:T916}
\end{figure}
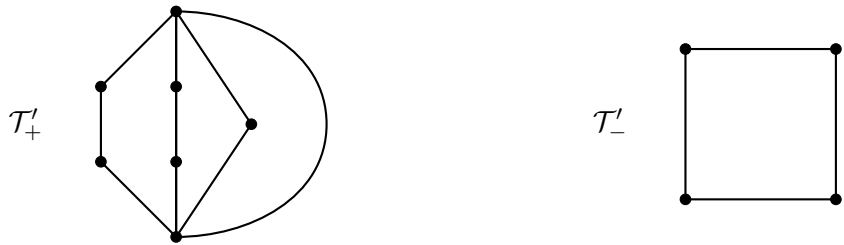
\end{example}

\subsection{A non-Montesinos knot}
Lastly, we consider the knot $K=11a_{250}$ which has Conway notation \hbox{$[(3;2)1(3;2)]$} and is not a Montesinos knot. A weighted tree \cite[p.~38]{ac} and diagram for $11a_{250}$ are given in Figure \ref{fig:11a250}. Hence, Theorem~\ref{main} is only applicable to $11a_{250}^*$. Thus, we have
\bea \label{tail11a250}
\Phi_{11a_{250}^*}(q) \= h_3^2.
\eea
The reduced Tait graphs of $11a_{250}$ are given in Figure \ref{fig:T11a250} and so (\ref{tail11a250}) also follows from (\ref{key}).
\begin{figure}[ht]
\begin{equation*}
\begin{tikzpicture}
\draw[thick] (-1,0) -- (1,0);
\draw[thick] (-1,0) -- ({-1-1/sqrt(2)},{1/sqrt(2)});
\draw[thick] (-1,0) -- ({-1-1/sqrt(2)},-{1/sqrt(2)});
\draw[thick] (1,0) -- ({1+1/sqrt(2)},{1/sqrt(2)});
\draw[thick] (1,0) -- ({1+1/sqrt(2)},-{1/sqrt(2)});
\filldraw [] ({-1-1/sqrt(2)},{1/sqrt(2)}) circle (2pt) 
  node[anchor=south] {$2$};
\filldraw [] ({-1-1/sqrt(2)},-{1/sqrt(2)}) circle (2pt) 
  node[anchor=north] {$3$};
\filldraw [black] (-1,0) circle (2pt) 
  node[anchor=south] at(-1+.05,.05) {$-0$};
\filldraw [] (0,0) circle (2pt)
  node[anchor=south] {$1$};
\filldraw [black] (1,0) circle (2pt)
  node[anchor=south ] at(1-.05,.05) {$-0$};
\filldraw [] ({1+1/sqrt(2)},{1/sqrt(2)}) circle (2pt)
  node[anchor=south] {$2$};
\filldraw [] ({1+1/sqrt(2)},-{1/sqrt(2)}) circle (2pt) 
  node[anchor=north] {$3$};
\node at (0,-1){};
\end{tikzpicture}
\hspace{60pt}
\includegraphics[width=.3\textwidth]{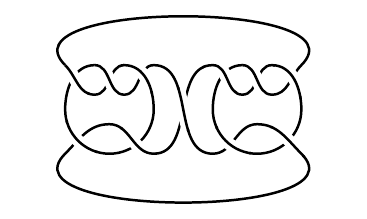}
\end{equation*}
\caption{A weighted tree and diagram for $11a_{250}$.}
\label{fig:11a250}
\end{figure}

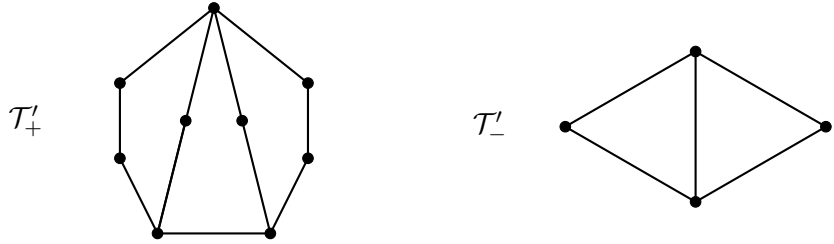
\begin{figure}[ht]
\begin{minipage}{.4\textwidth}
\begin{equation*}
\begin{tikzpicture}[scale=1]
    \node at(-2.5,0){$\mathcal T_+'$};
    \filldraw[black] (0,3/2) circle (2pt) node (a){};
    \filldraw [black] (-3/4,-3/2) circle (2pt) node (b){};
    \filldraw [black] (3/4,-3/2) circle (2pt) node (b2){};
    \filldraw [black] (-5/4,1/2) circle (2pt) node (c){};
    \filldraw [black] (-5/4,-1/2) circle (2pt) node (c2){};
    \filldraw [black] (-3/8,0) circle (2pt) node (d){};
    \filldraw [black] (3/8,0) circle (2pt) node (d2){};
    \filldraw [black] (5/4,1/2) circle (2pt) node (e){};
    \filldraw [black] (5/4,-1/2) circle (2pt) node (e2){};

    \draw[thick] (a.center) -- (d.center) -- (b.center);
    \draw[thick] (a.center) -- (d2.center) -- (b2.center);
    \draw[thick] (a.center) --(c.center) -- (c2.center)-- (b.center);
    \draw[thick] (a.center) -- (e.center) -- (e2.center) -- (b2.center) -- (b.center);
    \draw[thick] (d.center) -- (b.center);
\end{tikzpicture}
\end{equation*}
\end{minipage}
\begin{minipage}{.4\textwidth}
\begin{equation*}
\begin{tikzpicture}[scale=1]
    \node at(-1,0){$\mathcal T_-'$};
    \filldraw[black] (0,0) circle (2pt) node (d){};
    \filldraw[black] ({2*sqrt(3)},0) circle (2pt) node (e){};
    \filldraw [black] ({sqrt(3)},1) circle (2pt)node (a){};
    \filldraw [black] ({sqrt(3)},-1) circle (2pt)node (c){};

    \draw[thick] (a.center) -- (c.center);
    \draw[thick] (a.center) -- (d.center);
    \draw[thick] (c.center) -- (d.center);
    \draw[thick] (a.center) -- (e.center);
    \draw[thick] (c.center) -- (e.center);
\end{tikzpicture}
\end{equation*}
\end{minipage}
\caption{The reduced Tait graphs for $11a_{250}$.}
\label{fig:T11a250}
\end{figure}

\section{Asymptotics of $\Phi_{K}(q)$}

The first arborescent knot for which Theorem \ref{main} is not applicable is $8_5$. This knot is the first case in a family of pretzel knots for which Theorem \ref{main} does not apply. The first non-arborescent knot is $8_{18}$.  In this section, we provide numerical evidence that the tail in these situations cannot be written as a product of the functions $h_b=h_b(q)$ and, more generally, is not a classical modular form, quasimodular form or mock modular form. This leads to the question of the classification of alternating knots $K$ such that $\Phi_K(q)$ can be written as a product of $h_b$'s, see Question~\ref{q} and Remark~\ref{rq}.

We first compare the asymptotics of $h_b(e^{-h})$ as $h\to0$ with the asymptotics of $\Phi_K(e^{-h})$ for a given knot $K$. This is motivated by a similar approach for the classification of modular Nahm sums \cite{cgz,nahm,thesis,vz,zdilog}. As $h\to0$ on a ray in the right half-plane, we have
\bea\label{eq:asymphb}
	h_b(e^{-h}) \=
	\begin{cases}
		e^{-\pi^2/2bh} \sqrt{\frac{2\pi}{bh}}\,
		(\cos(2\pi (\tfrac 1 4 - \tfrac 1 {2b}))
		+\text O(h)))
		&\text{if $b$ is odd,}\\
		\frac2b
		+ \text O(h)
		&\text{if $b$ is even}.
	\end{cases}\\
\eea
The computation (\ref{eq:asymphb}) follows from the usual modular transformation of $h_b$ if $b$ is odd and from \cite[Proposition, p.~98]{lz} if $b$ is even. In both cases, we have $\lim_{h\to0} h\log(h_b(e^{-h})) \in\pi^2\Q$. Hence, if a $q$-series $f$ with asymptotics $h\log(f(e^{-h})) \to V$ as $h\rightarrow 0$ for some $V\in\C$ can be written as a product of $h_b$'s, the asymptotics in~\eqref{eq:asymphb} imply that $V\in\pi^2 \Q$. Although this condition cannot be verified numerically, computations can suggest if $V\in\pi^2 \Q$ is likely.

\begin{remark} \label{mod}
A similar argument can be used to (numerically) exclude other modular behavior. If $g$ is a modular form of integer (or half-integer) weight for some subgroup of $\text{SL}_2(\Z)$ of finite index, then its modular transformation implies that
\bea\label{eq:gasymp}
h\log(g(e^{-h})) \to V
\eea
as $h\to0$ on a ray in the right half-plane
for some $V\in\pi^2\Q$,
see \cite[Lemma~3.1]{vz}.
The same proof is also applicable to quasimodular forms (and mock modular forms) because they (their completions) have a similar transfomation under $S=\bigl(\begin{smallmatrix}
 0 & -1 \\ 1 & 0
\end{smallmatrix}\bigr)$.
Therefore, if a $q$-series $f$ has an asymptotic as in~\eqref{eq:gasymp}, $V\notin \pi^2\Q$ would also exclude these modular behaviors of $q^cf(q)$ for any $c\in\Q$.
\end{remark}

\subsection{Pretzel knots} \label{pk}
A pretzel knot $P(d_1,\ldots,d_n)$ with integers $d_i \geq 2$ for $i=1, \dotsc, n$ is an arborescent knot associated with a star-shaped weighted tree, i.e., a Montesinos knot with center~$0$ and~$n$ rays of length one where the leaves have weight $d_1,\ldots,d_n$, see Figure \ref{fig:pretzel}.

\begin{figure}[ht]
\begin{equation*}
\tikzmath{\llll = 0.05;}
\begin{tikzpicture}
\node (k) at (0,0){};
\filldraw [black] (k) circle (2pt) node[anchor=east] {$-0$};

\draw[thick] (k.center) -- (1.5,1.5);
\filldraw [black] (1.5,1.5) circle (2pt)
  node[anchor=west]{$d_1$};

\draw[thick] (k.center) -- (1.5,.5);
\filldraw [black] (1.5,0.5) circle (2pt)
  node[anchor=west] {$d_2$};

\draw[thick] (k.center) -- (1.5,-1.5);
\filldraw [black] (1.5,-1.5) circle (2pt)
  node[anchor=west] {$d_n$};

\node at (1.5,-.5) {{\rotatebox{90}{$\cdots$}}};

\end{tikzpicture}
\end{equation*}
\caption{A weighted tree for $P(d_1,\ldots,d_n)$.}
\label{fig:pretzel}
\end{figure}
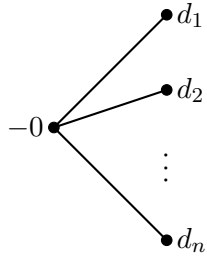
\noindent Here, Theorem~\ref{main} is not applicable. The tail of the colored Jones polynomial for pretzel knots of the form $K=P(2k+1,2u+1,2)$ for $k,u\in\Z_{\geq1}$ has been computed explicitly. Recall that the $q$-binomial coefficient is given by
\bean
\qbinom nk
\coloneqq
\frac{(q)_n}{(q)_k(q)_{n-k}}\thin.
\eean
By \cite[Theorem~3.1]{eh1}, we have
\bea\label{eq:tailpretzel}
\Phi_K(q) \thin=\thin 
(q)^2_\infty\,
\sum_{l_1\geq 0} \!\cdots\! \sum_{l_k\geq 0}\ 
\sum_{p_1\geq 0} \!\cdots\! \sum_{p_u\geq 0}\,
\frac{q^{L_1^2 + \cdots +L_k^2 + L_1 + \cdots + L_k}}
{(q)_{l_1}\cdots (q)_{l_k}}\,
\frac{q^{P_1^2 + \cdots +P_u^2 + P_1 + \cdots + P_u}}
{(q)_{p_1}\cdots (q)_{p_u}}\,
\qbinom{l_k + p_u}{p_u}
\eea
where
$L_j = l_j + \cdots + l_k$ for $j=1,\ldots,k$ and
$P_j = p_j + \cdots + p_u$ for $j=1,\ldots,u$. As discussed in Example~\ref{ex:arborescent_knots} (2), $K=8_5$ can be constructed from the weighted tree given in Figure \ref{fig:85sur}. In particular, note that Theorem~\ref{main} is not applicable as $0\in w(\calV_-)$. The Tait graph $\mathcal{T}_{+}$ of $K=8_5$ is given in Figure \ref{fig:T85} and thus (\ref{key}) is also not applicable.

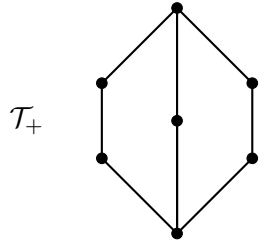
\begin{figure}[ht]
\begin{equation*}
\begin{tikzpicture}
    \node at(-2,0){$\mathcal T_+$};
    \filldraw[black] (0,3/2) circle (2pt) node (a){};
    \filldraw [black] (0,-3/2) circle (2pt) node (b){};
    \filldraw [black] (-1,1/2) circle (2pt) node (c){};
    \filldraw [black] (-1,-1/2) circle (2pt) node (c2){};
    \filldraw [black] (0,0) circle (2pt) node (d){};
    \filldraw [black] (1,1/2) circle (2pt) node (e){};
    \filldraw [black] (1,-1/2) circle (2pt) node (e2){};

    \draw[thick] (a.center) -- (b.center);
    \draw[thick] (c.center) -- (c2.center);
    \draw[thick] (c.center) -- (a.center);
    \draw[thick] (c2.center) -- (b.center);
    \draw[thick] (a.center) -- (e.center) -- (e2.center) -- (b.center);
    \draw[thick] (d.center) -- (b.center);
\end{tikzpicture}
\end{equation*}
\caption{$\mathcal{T}_{+}$ for $8_5$.}
\label{fig:T85}
\end{figure}
\noindent  Observe that $K=8_5=P(3,3,2)$ (see Figure \ref{fig:85sur}) and so~\eqref{eq:tailpretzel} implies
\bea\label{eq:tail85}
    \Phi_{8_5}(q) &\=
    (q)_{\infty}^2
    \sum_{a,b\geq 0}
    \frac{q^{a^2+a+b^2+b}}{(q)_a(q)_b}
    \thin
    \qbinom{a+b}{b} \\
    &\= 1 - 2q + q^2 - 2q^4 + 3q^5 - 3q^8 + q^9 + \operatorname O(q^{10}).
  \eea
By comparing the first two coefficients, we see $\Phi_{8_5}(q) \neq h_4^2h_3$ in contrast to Theorem~\ref{main}.
As $h\searrow0$ and $q=e^{-h} \nearrow 1$, numerics suggest that we have
\bea\label{eq:Phi85asymp1}
	h\log(\Phi_{8_5}(e^{-h}))
	\;\to\;
	V_1
\eea
where, with $X_1\approx0.5436890$ a root of $x^3+x^2+x-1$,
\bean \label{eq:V85}
V_1 & \=  -4\operatorname{Li}_2(X_1)+\Li_2(X_1^2)-2\log(X_1)^2+\frac{\pi^2}6\\
& \= -1.352936859\cdots.
\eean
Here, $\Li_2$ is the dilogarithm function \cite{zdilog}. Although we do not address it here, we note that~\eqref{eq:Phi85asymp1} can potentially be proven using the techniques in \cite{gzasymp,vz,zdilog}. Computing $V_1$ to a higher precision, it seems unlikely that $V_1\in\pi^2 \Q$ is true. This suggests that $q^c\Phi_{8_5}(q)$ for any $c\in\Q$ cannot be written as a product of $h_b$'s and, more generally, is not modular as in Remark \ref{mod}. If $h\to 0$ on a fixed ray in the right half-plane with $\arg h = .45\pi$, then the limit is even more convincing: numerics suggest that we have
\bea\label{eq:Phi85asymp2}
	h\log(\Phi_{8_5}(e^{-h}))
	\;\to\;
	V_2
\eea
where, with $X_2\approx-0.7718445 - 1.115143\thin i$ a root of $x^3+x^2+x-1$,
\bea \label{eq:V285}
	V_2 &\= 
	-4\Li_2(X_2)+\Li_2(X_2^2)-2\log(X_2)^2-4\pi i\log(X)+\frac{\pi^2}6\\
	&\= -14.12794\cdots + 3.177293\cdots\thin i,
\eea
and $V_2\notin\pi^2\Q$.

In view of (\ref{key}), the following phenomenon seems to relate the Tait graph $\calT_+$ of $K=8_5$ to the double-sum in~\eqref{eq:tail85}. The Tait graph $\calT_+$ of $K=8_5$ consists of two $5$-gons that are \emph{not} edge-connected. If they were, then $\Phi_{8_5}(q)$ would equal $h_5^2$ by (\ref{key}) and $\Phi_{8_5}(q)$ is \emph{almost} given by $h_5^2$, namely if $\qbinom{a+b}b$ was not present in~\eqref{eq:tail85}, then the resulting $q$-series would equal $h_5^2$ by the second Rogers-Ramanujan identity.

Similar computations have been performed for pretzel knots \hbox{$K=P(2k+1,2u+1,2)$} with $k,u\leq 5$ using \eqref{eq:tailpretzel}. These computations suggest that for all of these pretzel knots we have $h\log(\Phi_K(e^{-h})) \to V$ as $h\to0$ for some 
$V\notin\pi^2\Q$. Therefore, we suspect that $\Phi_K(q)$ for these knots $K$ cannot be written as a product of $h_b$'s.
Moreover, the Tait graph of $K$ consists of two polygons of size $2k+1$ and $2u+1$, respectively. Finally, if $\qbinom {l_k+p_u}{p_u}$ is removed from~\eqref{eq:tailpretzel}, the resulting $q$-series would equal $h_{2k+1}h_{2u+1}$ by the Andrews-Gordon identities \cite{andrews}.

\subsection{The $8_{18}$ knot} \label{818}
We also discuss the modularity of $\Phi_K(q)$ for the first non-arborescent knot $K=8_{18}$ \cite{ac,c}. Although $K=8_{18}$ cannot be constructed from a weighted tree, it does arise from the weighted graph \cite[p.~151]{ac} given in Figure \ref{fig:818}. The Tait graphs of $K = 8_{18}$ are given in Figure \ref{fig:T818} and thus (\ref{key}) is not applicable.

\begin{figure}[ht]
\begin{equation*}
\begin{tikzpicture}[scale=.45]
\node (v1) at (1,2.414214){};
\node (v2) at (2.414214,1){};
\node (v3) at (2.414214,-1){};
\node (v4) at (1,-2.414214){};
\node (v5) at (-1,-2.414214){};
\node (v6) at (-2.414214,-1){};
\node (v7) at (-2.414214,1){};
\node (v8) at (-1,2.414214){};

\filldraw (v1) circle (4pt)
 node[anchor=south west] at (v1){$1$};
\filldraw (v2) circle (4pt)
 node[anchor=south west] at (v2){$-1$};
\filldraw (v3) circle (4pt)
 node[anchor=north west] at (v3){$1$};
\filldraw (v4) circle (4pt)
 node[anchor=north west] at (v4){$-1$};
\filldraw (v5) circle (4pt)
 node[anchor=north east] at (v5){$1$};
\filldraw (v6) circle (4pt)
 node[anchor=north east] at (v6){$-1$};
\filldraw (v7) circle (4pt)
 node[anchor=south east] at (v7){$1$};
\filldraw (v8) circle (4pt)
 node[anchor=south east] at (v8){$-1$};

\draw[very thick] (v1.center) -- (v2.center);
\draw[very thick] (v2.center) -- (v3.center);
\draw[very thick] (v3.center) -- (v4.center);
\draw[very thick] (v4.center) -- (v5.center);
\draw[very thick] (v5.center) -- (v6.center);
\draw[very thick] (v6.center) -- (v7.center);
\draw[very thick] (v7.center) -- (v8.center);
\draw[very thick] (v8.center) -- (v1.center);
\end{tikzpicture}
\hspace{100pt}
\includegraphics[width=0.2\textwidth]{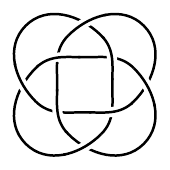}
\end{equation*}
\caption{A weighted graph and diagram for $8_{18}$.}
\label{fig:818}
\end{figure}

\begin{figure}[ht]
\begin{equation*}
\begin{tikzpicture}[scale=1]
\node (v0) at (-2,0){$\mathcal T_\pm$};
\node (v0) at (0,0){};
\node (v1) at (1,1){};
\node (v2) at (1,-1){};
\node (v3) at (-1,-1){};
\node (v4) at (-1,1){};

\filldraw (v1) circle (2pt);
\filldraw (v2) circle (2pt);
\filldraw (v3) circle (2pt);
\filldraw (v4) circle (2pt);
\filldraw (v0) circle (2pt);

\draw[thick] (v1.center) -- (v2.center);
\draw[thick] (v2.center) -- (v3.center);
\draw[thick] (v3.center) -- (v4.center);
\draw[thick] (v4.center) -- (v1.center);
\draw[thick] (v0.center) -- (v1.center);
\draw[thick] (v0.center) -- (v2.center);
\draw[thick] (v0.center) -- (v3.center);
\draw[thick] (v0.center) -- (v4.center);
\end{tikzpicture}
\end{equation*}
\caption{The Tait graphs for $8_{18}$.}
\label{fig:T818}
\end{figure}
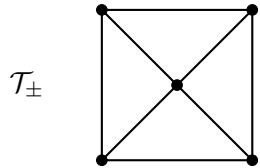
\noindent Using the algorithm from \cite{glz} and an elementary (yet lengthy) calculation, we have
\begin{align}\label{eq:tail818}
\Phi_{8_{18}}(q) \= &(q)_\infty^2 
\sum_{a,b\geq0}(-1)^{a+b}
\frac
{q^{\frac 1 2 a(a+1)+\frac 1 2 b(b+1)}}
{(q)_a(q)_b}
\thin
\qbinom{a+b}{b}
\\\nonumber
\=&
 1 - 4q + 2q^2 + 9q^3 - 5q^4 - 8q^5 - 14q^6 + 10q^7 + 21q^8 + 14q^9 + \text O(q^{10}).
\end{align}
From the first four coefficients, we see that $\Phi_{8_{18}}(q) \neq h_3^4$ in contrast to Theorem~\ref{main}. Similar to $8_5$, we can compute the asymptotics of $\Phi_{8_{18}}(q)$ numerically. As $h\searrow0$ on the real axis, it appears that
$h\log(\Phi_{8_{18}}(e^{-h}))\to \frac{\pi^2}3$
which matches the asymptotics of a modular $q$-series. In fact, it has the same leading asymptotics as $h_3^4$ (which is the suggested formula for $\Phi_{8_{18}}(q)$ from Theorem~\ref{main} if it was applicable). However, if we instead consider the limit as $h\to 0$ with $\arg h = .45\pi$, then according to numerics
\bea\label{eq:lim818}
h\log(\Phi_{8_{18}}(e^{-h}))
\;\to\;
V_2
\eea
where 
\bea\label{eq:V818}
V_2
&\= -\frac{\pi^2}{2} + 4\Li_2(i)
\= -5.757269\cdots + 3.663862\cdots \thin i.
\eea
This suggests that $\Phi_{8_{18}}(q)$ cannot be a product of $h_b$'s as $V_2 \notin \pi^2 \mathbb{Q}$. The Tait graphs $\calT_\pm$ of $K=8_{18}$ in Figure \ref{fig:T818} consist of four triangles and if $\qbinom{a+b}{b}$ was absent in~\eqref{eq:tail818}, then the resulting $q$-series equals $h_3^4 = (q)_\infty^4$ (which would be consistent with (\ref{key}) if it was applicable).

\subsection{Questions and outlook}
The numerical observations in Section~\ref{pk} suggest the following question.
\begin{question} \label{q}
For an alternating arborescent knot $K$ as in Theorem~\ref{main}, are the following equivalent:
\begin{enumerate}[label=(\roman*)]
    \item $0\notin w(\calV_-)$,
    \item The $+$-Tait graph of $K$ is the edge-connected sum of polygons,
    \item $\Phi_K(q)$ is a product of $h_b$'s.
\end{enumerate}
\end{question}

\begin{remark}\label{rq}
By Corollary~\ref{cor:0inwVTait}, (i) and (ii) of Question~\ref{q} are equivalent. Moreover, by Theorem~\ref{main} and (\ref{key}), either (i) or (ii) implies (iii). Clearly, it would be beneficial to consider more examples, e.g., \cite{gsg}.
In view of Section~\ref{818}, one could hazard the following: for {\it any} alternating knot $K$, (ii) and (iii) are equivalent to the fact that $K$ is an arborescent knot as in Theorem~\ref{main} with $0\notin w(\calV_-)$.
\end{remark}

Finally, it would be desirable to understand the topological meaning of the (non)-modularity of $\Phi_K(q)$. Note that the limits in~\eqref{eq:Phi85asymp2} and~\eqref{eq:lim818} for $8_5$ and $8_{18}$, respectively, can be rewritten as
\bea \label{guess}
\lim_{N\to\infty}
\frac{\log (\Phi_K(e^{2\pi i/N}))}N
\= \frac{ iV}{2\pi}
\eea
where $\frac 1 N\to0 $ along a ray in the upper half-plane. Because $\Phi_K(q) = \lim_N q^{c_N}J_N(K,q)$ for some factor $c_N$ that is quadratic in $N$, equation~\eqref{guess} resembles the complexified Volume Conjecture \cite{mmoty}. This suggests that $V$ as in~\eqref{eq:V285} and~\eqref{eq:V818} could be related to hyperbolic properties of $K$. This will be investigated in future work.

\section*{Acknowledgements} 
The first author is grateful to the Max-Planck-Institut f{\"u}r Mathematik for their hospitality and support as this work began during his stay from May 1-31, 2023. The second author would like to thank Stavros Garoufalidis and his Ph.D. advisor Don Zagier for enlightening discussions, as well as Francis Bonahon for helpful explanations regarding arborescent links. Part of this work appeared in \cite{thesis}. The authors were partially funded by the Irish Research Council Advanced Laureate Award IRCLA/2023/1934. The second author was also partially supported by the Max-Planck-Gesellschaft.

\end{document}